\newenvironment{pf}[1][Proof.]{\noindent \textbf{#1:} }{}
\newtheorem{thm}{Theorem}
\newtheorem{prop}[thm]{Proposition}
\newtheorem{lemma}[thm]{Lemma}
\newtheorem{claim}{Claim}
\newtheorem{q}[thm]{Question}
\newtheorem{Rmks}[thm]{Remarks}
\theoremstyle{definition}
\newtheorem{definition}[thm]{Definition}
\newtheorem*{definition*}{Definition}
\newtheorem{exple}[thm]{Example}
\newtheorem*{exple*}{Example}
\newtheorem{remark}[thm]{Remark}
\newtheorem*{rmk}{Remark}
\newtheorem*{rmks}{Remarks}
\newcommand\xqed[1]{\leavevmode\unskip\penalty9999 \hbox{}\nobreak\hfill
	\quad\hbox{#1}}
\newcommand\demo{\xqed{$\Diamond$}}
\newcommand{\N}{\mathbb{N}}
\newcommand{\R}{\mathbb{R}}
\newcommand\Om{\Omega}
\newcommand\om{\omega}
\newcommand\x{\times}
\renewcommand\phi{\varphi}
\newcommand\eps{\varepsilon}
\newcommand\wo{\setminus}
\newcommand\nn{{\nonumber}}
\newcommand\sub{\subseteq}
\newcommand\smfld{(M, \omega)}
\newcommand\smfldp{(M', \omega')}
\newcommand\omst{\omega_{\mathrm{st}}}
\newcommand{\Forms}[2]{\Omega^{#1, #2}}
\newcommand\Int{\operatorname{Int}}
\title{Recognition of objects through symplectic capacities}
\author{Yann Guggisberg\and Fabian Ziltener}
\address{Affiliation of Y.~Guggisberg: Utrecht University\\
mathematics institute\\
Hans Freudenthalgebouw\\
Budapestlaan 6\\
3584 CD Utrecht\\
The Netherlands
}
\email{y.b.guggisberg@uu.nl}
\address{Affiliation of F.~Ziltener: Utrecht University\\
mathematics institute\\
Hans Freudenthalgebouw\\
Budapestlaan 6\\
3584 CD Utrecht\\
The Netherlands
}
\email{f.ziltener@uu.nl}
\thanks{Y.~Guggisberg's work on this publication is part of the project \emph{Symplectic capacities, recognition, discontinuity, and helicity} (project number 613.009.140) of the research programme \emph{Mathematics Clusters}, which is financed by the Dutch Research Council (NWO). We gratefully acknowledge this funding.}
\begin{document}

\maketitle

\begin{abstract}We prove that the generalized symplectic capacities recognize objects in symplectic categories whose objects are of the form $(M, \omega)$, such that $M$ is a compact and 1-connected manifold, $\omega$ is an exact symplectic form on $M$, and there exists a boundary component of $M$ with negative helicity. The set of generalized symplectic capacities is thus a complete invariant for such categories. This answers a question by Cieliebak, Hofer, Latschev, and Schlenk. It appears to be the first result concerning this question, except for recognition results for manifolds of dimension 2, ellipsoids, and polydiscs in $\mathbb{R}^4$. Strikingly, our result holds more generally for differential form categories. Recognition of objects is therefore not a symplectic phenomenon.

We also prove a version of the result for normalized capacities.
\end{abstract}

\tableofcontents

\section{Introduction and main results}

The main results of this article are concerned with recognition through generalized capacities and normalized capacities on differential form categories. These generalize (generalized and normalized) symplectic capacities and symplectic categories. To explain our results, we define the notion of a capacity in the following even more general setup. Let $\widehat{\mathcal{C}}$ be a category determined by a formula. By this, we mean a category whose objects, morphisms, and pairs constituting the composition map\footnote{By these we mean pairs $((f, g), h)$, where $(f, g)$ is the input of the composition map and $h$ is the output, i.e. $f \circ g = h$.} are determined by well-formed logical formulas.
\begin{rmk}[categories and set theory] This article is based on the Zermelo-Fraenkel axiomatic system with choice (ZFC). In most cases, the collection of objects and morphisms of a category are proper classes and not sets. Therefore, we need to be careful when talking about general categories. However, categories determined by a formula can be dealt with in ZFC.\demo
\end{rmk}
We assume that $\widehat{\mathcal{C}}$ is locally small\footnote{This means that for every pair of objects $(A, B)$ of $\widehat{\mathcal{C}}$, the collection of morphisms from $A$ to $B$ is a set.}. Suppose also that there exists a set $S$ of objects of $\widehat{\mathcal{C}}$, such that every object of $\widehat{\mathcal{C}}$ is isomorphic to an element of $S$. We denote $\R^+:=(0, \infty)$ and fix an $\R^+$-action on $\widehat{\mathcal{C}}$ that is defined by a logical formula. (See Appendix \ref{section_subcat}, Definition \ref{def_action_cat}.) Let $\mathcal{C}$ be an $(\R^+)$-invariant isomorphism-closed subcategory of $\widehat{\mathcal{C}}$. (See Appendix \ref{section_subcat}, Definitions \ref{def_isom_closed_subcat} and \ref{def_invariant_subcat}.) \label{setting}

\begin{exple*}[invariant isomorphism-closed subcategory] As an example, $\mathcal{C}=\widehat{\mathcal{C}}$ is an $\R^+$-invariant, isomorphism-closed subcategory of itself. This follows from the assumptions of local smallness and the existence of a set $S$ as above. (See Appendix \ref{section_subcat}, Remark \ref{rmk_isom_closed_subcat}.)
\end{exple*}

\begin{definition}[generalized and normalized capacity]\label{def_capacity}
	A \emph{(generalized) capacity}\footnote{In \cite{CHLS07}, a \emph{capacity} is defined to be a generalized capacity that is non-trivial. Since we do not use that condition in this article, we will use the word \emph{capacity} in the sense of \emph{generalized capacity}.} on $\mathcal{C}$ is a pair $(\mathcal{C}_0, c)$, where:
	\begin{itemize}
		\item $(\mathcal{O}_0, \mathcal{M}_0) := \mathcal{C}_0$ is an invariant full small subcategory of $\mathcal{C}$, such that every object of $\mathcal{C}$ is isomorphic to an object of $\mathcal{C}_0$.
		\item $c:\mathcal{O}_0 \to [0, \infty]$ is a map with the following properties:
		\begin{enumerate}[label = (\roman*)]
			\item \textbf{(monotonicity)} If  $A$ and $B$ are two objects in $\mathcal{O}_0$, such that there exists a $\mathcal{C}_0$-morphism from $A$ to $B$, then
			\begin{equation*}
				c(A) \leq c(B).
			\end{equation*}
			\item \textbf{(conformality)} For every $A \in \mathcal{O}_0$ and every $a \in \R^+$ we have
			\begin{equation*}
				c(a_*A) = ac(A),
			\end{equation*}
		where $a_*A$ denotes the action of $a \in \R^+$ on the object $A$.
		\end{enumerate}
	Let $X$ be a set of objects of $\mathcal{C}$. We say that a capacity $(\mathcal{C}_0, c)$ is ($X$-)\emph{normalized} iff
	\begin{enumerate}[resume*]
		\item \textbf{(normalization)}\label{condition_normalization} $c(A) = 1$ for every $A$ in $\mathcal{O}_0$ that is isomorphic to an element of $X$.
	\end{enumerate}
	\end{itemize}
\end{definition}

In the following $\mathcal{O}_0$ always denotes the set of objects of $\mathcal{C}_0$.

\begin{rmks}[capacity]
	\begin{itemize}
		\item It follows from Definition \ref{def_isom_closed_subcat}, the local smallness of $\widehat{\mathcal{C}}$, and Remark \ref{rmk_invariant_subcat} that there exists an invariant full small subcategory $\mathcal{C}_0$ of $\mathcal{C}$, such that every object of $\mathcal{C}$ is isomorphic to an object of $\mathcal{C}_0$. For every such $\mathcal{C}_0$, the pair $(\mathcal{C}_0, c=0)$ is a capacity. Hence, capacities on $\mathcal{C}$ exist.
		\item  It is not possible to define the class of all maps between two classes in ZFC, even if the target class is actually a set. For this reason, we restrict the domain of a capacity to a set of objects $\mathcal{O}_0$ as above.
		\item If $A$ and $B$ are isomorphic objects in $\mathcal{C}$, then by monotonicity, every capacity $(\mathcal{C}_0, c)$, such that $A$ and $B$ lie in $\mathcal{O}_0$, satisfies $c(A) = c(B)$.
	\end{itemize}
	\demo
\end{rmks}

\begin{definition*}[recognition]
	\begin{itemize}
		\item Let $\mathcal{C}$ be an invariant isomorphism-closed subcategory of $\widehat{\mathcal{C}}$. We say that the (generalized) capacities on $\mathcal{C}$ \emph{recognize objects} iff for every pair $(A, B)$ of non-isomorphic objects of $\mathcal{C}$ there exists a capacity $(\mathcal{C}_0, c)$, such that $A, B \in \mathcal{O}_0$ and $c(A) \neq c(B)$.
		\item Let $X$ be a set of objects of $\mathcal{C}$. We say that the $X$-normalized capacities on $\mathcal{C}$ \emph{recognize objects} iff for every pair $A, B$ of non-isomorphic objects of $\mathcal{C}$, such that $A$ or $B$ is not isomorphic to any element of $X$, there exists an $X$-normalized capacity $(\mathcal{C}_0, c)$, such that $A, B \in \mathcal{O}_0$ and $c(A) \neq c(B)$.
\end{itemize}
\end{definition*}
\begin{rmk}[recognition]
	By the normalization condition \ref{condition_normalization} in Definition \ref{def_capacity}, there is no $X$-normalized capacity that distinguishes between two objects that are isomorphic to elements of $X$.
	\demo
\end{rmk}

In order to define the notion of a form category, we need the following. Let $m, k \in\N_0:=\{0,1,\ldots\}$.
\begin{definition*}[universal form category] We define $\Forms{m}{k}$, the \emph{universal $(m,k)$-form category} as follows:
\begin{itemize}
\item Its objects are the pairs $(M, \omega)$, where  $M$ is a (smooth) manifold\footnote{All the manifolds in this article are smooth, finite-dimensional, and are allowed to have boundary.} of dimension $m$ and $\omega$ is a differential $k$-form on $M$.
\item Its morphisms are (smooth) embeddings\footnote{We do not impose any condition involving the boundaries of the manifolds.} that intertwine the differential forms.
\end{itemize}
\end{definition*}

\begin{rmk}[universal form category] The category $\Forms{m}{k}$ is determined by a logical formula and locally small. For every $a \in \R^+$, we consider the functor that sends an object $(M, \omega)$ to $(M, a\omega)$ and a morphism $\varphi:(M, \omega) \to (M', \omega')$ to the same map viewed as a morphism $(M, a\omega) \to (M', a\omega')$. This defines an $\R^+$-action on $\Forms{m}{k}$.
	
	For every set $X$, we denote by $\mathcal{P}(X)$ its power set. Every object of $\Forms{m}{k}$ is isomorphic to an element of the set
	\begin{align}\label{eq_subcat_beth1}
		\begin{split}
		\mathcal{O}^{m, k}_0:=\big\{&(M, \omega) \text{  object of }\Forms{m}{k}\,\big|\,\\
		&\text{The set underlying }M\text{ is a subset of }\mathcal{P}(\N_0).\big\}.
	\end{split}
	\end{align}
	 This is explained in \cite[p.12, footnote 28]{JZ21}. Therefore, $\widehat{\mathcal{C}}:=\Forms{m}{k}$ fits into the setting described on page \pageref{setting} of this article.
	 \demo
\end{rmk}

\begin{definition*}[$(m, k)$-form category, symplectic category and capacity]
	An $(m, k)$-\emph{(differential) form category} is an invariant isomorphism-closed subcategory of $\Forms{m}{k}$. For every $n\in\N_0$, we call a $(2n, 2)$-form category whose objects are symplectic manifolds, a \emph{symplectic category}. We call a (generalized) capacity on a symplectic category also a \emph{(generalized) symplectic capacity}.
\end{definition*}

\begin{exple}[embedding capacity]\label{exple_embedding_capacity}
Let $\mathcal{C} = (\mathcal{O}, \mathcal{M})$ be an $(m, k)$-form category, $\mathcal{C}_0$ be the invariant full small subcategory of $\mathcal{C}$ whose set of objects is given by $\mathcal{O}_0:= \mathcal{O}\cap\mathcal{O}^{m, k}_0$, and let $(M, \omega)$ be an object of $\Forms{m}{k}$. We define the \emph{(domain-)embedding capacity} for $\smfld$ and $\mathcal{C}$ to be the map
\begin{align}
\nonumber c_{\smfld} := c_{\smfld}^{\mathcal{C}}&: \mathcal{O}_0 \to [0, \infty],\\ 
\begin{split}\label{eqn_def_embedding_capacity}
c_{\smfld}\smfldp :=\\
\sup \big\{ a\in (0, \infty) &\,\big|\, \exists \,\Forms{m}{k}\text{-morphism }(M, a\omega) \to \smfldp  \big\}.
\end{split}
\end{align}
The pair $(\mathcal{C}_0, c_{(M, \omega)})$ is a capacity.

Let $n\in\N_0$. We denote by $B := B^{2n}$ the $2n$-dimensional open unit ball, by $Z:=Z^{2n}:=B^2\times \R^{2n-2}$ the open unit cylinder, and by $\omega_{\mathrm{st}}$ the standard symplectic form on $\R^{2n}$. Consider the case $k=2$. We define the \emph{Gromov width} for $\mathcal{C}$ to be
\begin{equation*}
w := w^{\mathcal{C}} := c_{(B, \omst|_B)}^{\mathcal{C}}.
\end{equation*}
If $B$ and $Z$ are objects of $\mathcal{C}$, then $(\mathcal{C}_0, w)$ is a $\{B, Z\}$-normalized capacity by Gromov's non-squeezing Theorem \cite{Gro85}.
\end{exple}

The goal of this article is to provide an answer to the following question by K.~Cieliebak, H.~Hofer, J.~Latschev, and F.~Schlenk for certain symplectic categories containing many objects.

\begin{q}[\cite{CHLS07}, question 2, p. 20]\label{q_recognition}
Do the generalized symplectic capacities on a given symplectic category recognize objects?
\end{q}

This question appears to be widely open. To our knowledge, it has only been answered in dimension 2, for the category of ellipsoids in $\R^{2n}$ (for every $n \in \N$), and for polydiscs in $\R^4$. More precisely, a Moser-type argument classifies closed\footnote{This means compact and without boundary.} connected symplectic manifolds of dimension 2 by their total area. The Ekeland-Hofer capacities recognize ellipsoids in $\mathbb{R}^{2n}$. (See \cite[Fact 10, p.27]{CHLS07}.) The Gromov width and the volume capacity recognize polydiscs in $\R^4$. (See \cite[Example 6(iv), p.21]{CHLS07}.)

A negative answer to a variant of Question \ref{q_recognition} was recently provided by E.~Kerman and Y.~Liang in \cite[Theorem 1.3]{KL}. These authors showed that the Gutt-Hutchings capacities and the volume capacity do not recognize starshaped compact smooth submanifolds (with boundary) in $\R^4$. (This also follows from the proof of Proposition \ref{prop:neg hel} below.)

To state our main results, we need the following definitions. Let $k, n \in \N:=\{1,2,\ldots\}$.

\begin{definition}[helicity]\label{def_helicity}
	Assume that $n\geq2$. Let $N$ be a closed $(kn -1)$-dimensional manifold with an orientation $O$, and let $\sigma$ be an exact $k$-form on $N$. We define the helicity of $(N, O, \sigma)$ to be the integral
	\begin{equation*}
		h(N, O, \sigma) = \int_{N, O} \alpha \wedge \sigma^{\wedge(n-1)},
	\end{equation*}
	where $\alpha$ is an arbitrary primitive of $\sigma$ and $\int_{N, O}$ denotes the integral on $N$ with respect to the orientation $O$. By Lemma \ref{lemma_helicity_well_def} below, this number is well-defined. We detail some basic properties of helicity in Section \ref{section_helicity} below.
\end{definition}

\begin{definition*}[maxipotent]
	A $k$-form $\om$ on a $kn$-dimensional manifold $M$ is called \emph{maxipotent} iff $\omega^{\wedge n}$ is nowhere-vanishing.
\end{definition*}
\begin{rmk}[maxipotent]
	\begin{itemize} 
		\item If $n \geq 2$ and $k$ is odd, we have that $\omega \wedge \omega =0$, and hence there are no maxipotent $k$-forms.
		\item A maxipotent $k$-form $\omega$ on a $kn$-dimensional manifold induces an orientation on the manifold. The orientation is induced by the $kn$-form $\omega^{\wedge n}$, which is nowhere-vanishing, and hence a volume form.
	\end{itemize}
\demo
\end{rmk}

The first main result of this article is as follows.

\begin{thm}[recognition of objects]\label{thm_recognition}
Assume that $n\geq2$. Let $\mathcal{C}$ be a $(kn, k)$-form category, such that for every object $(M, \omega)$ of $\mathcal{C}$, the manifold $M$ is compact and 1-connected\footnote{This means connected and simply connected.}, $\omega$ is exact and maxipotent, and $\om$ induces negative helicity on some connected component of the boundary $\partial M$ \footnote{By this we mean that the pullback of $\omega$ to the component has (strictly) negative helicity with respect to the orientation induced by $\omega$.}. Then the generalized capacities on $\mathcal{C}$ recognize objects.
\end{thm}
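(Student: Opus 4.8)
The plan is, given non-isomorphic objects $A=(M,\om)$ and $B=(M',\om')$ of $\mathcal{C}$, to exhibit one of the embedding capacities of Example \ref{exple_embedding_capacity} that distinguishes them; by isomorphism-invariance of capacities we may assume $A,B$ lie in the object set of the relevant capacity. Write $c_A:=c_{(M,\om)}$ and $c_B:=c_{(M',\om')}$. For an object $X=(M_X,\om_X)$ put $V(X):=\int_{M_X}\om_X^{\wedge n}$, integrated with respect to the orientation induced by the maxipotent form; since $M_X$ is compact and $\om_X^{\wedge n}$ is a volume form, $V(X)\in(0,\infty)$. Any $\Forms{kn}{k}$-morphism $(M_X,a\om_X)\to(M_Y,\om_Y)$, i.e.\ an embedding $\phi$ with $\phi^*\om_Y=a\om_X$, satisfies $\phi^*(\om_Y^{\wedge n})=a^n\om_X^{\wedge n}$, which is orientation-compatible, so $a^nV(X)=\int_{\phi(M_X)}\om_Y^{\wedge n}\le V(Y)$. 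Two consequences: first, $c_A(A)=c_B(B)=1$, since the identity shows the defining set contains $a=1$ while $a^nV\le V$ forces every element to be $\le 1$; second, every morphism $(M,a\om)\to(M',\om')$ obeys $a\le (V(B)/V(A))^{1/n}$, and symmetrically.

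If $V(A)\ne V(B)$, say $V(A)<V(B)$, then every competitor in the definition of $c_B(A)$ has $a\le (V(A)/V(B))^{1/n}<1$, whence $c_B(A)<1=c_B(B)$ and $c_B$ distinguishes $A$ and $B$. I may therefore assume $V(A)=V(B)$. Now the second consequence gives $c_A(B)\le 1=c_A(A)$ and $c_B(A)\le 1=c_B(B)$, so if either inequality is strict the corresponding embedding capacity distinguishes the objects. It remains to exclude the case $c_A(B)=c_B(A)=1$, which I will do by showing that it forces $A\cong B$.

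The crux is an attainment statement: under the standing hypotheses, $c_A(B)=1$ implies that the supremum defining $c_A(B)$ is attained, i.e.\ there is a genuine morphism $\phi_\infty\colon(M,\om)\to(M',\om')$. Granting this, I argue as follows. Since $\om$ is maxipotent and $\phi_\infty^*\om'=\om$, the form $\phi_\infty^*(\om'^{\wedge n})=\om^{\wedge n}$ is nowhere zero, so $\phi_\infty$ is a local diffeomorphism. Assuming further (see below) that $\phi_\infty$ carries $\partial M$ into $\partial M'$, the map $\phi_\infty$ is a proper local diffeomorphism of manifolds with boundary, hence a covering map onto its image; the image is open and closed, so by connectedness of $M'$ it is all of $M'$. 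As $M'$ is $1$-connected the covering is trivial, and $V(M)=V(M')$ forces a single sheet; thus $\phi_\infty$ is a diffeomorphism with $\phi_\infty^*\om'=\om$, i.e.\ an isomorphism $A\cong B$. The symmetric argument from $c_B(A)=1$ yields the same conclusion, contradicting $A\not\cong B$ and finishing the proof.

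It remains to prove attainment, and this is the step I expect to be the main obstacle; it is also where the negative-helicity hypothesis is essential. Starting from morphisms $\phi_j\colon(M,a_j\om)\to(M',\om')$ with $a_j\nearrow 1$, I would use compactness of $M$ and $M'$ together with maxipotence (which keeps the first derivatives away from degeneracy) to pass to a $C^\infty$-limit $\phi_\infty$ with $\phi_\infty^*\om'=\om$, which is then automatically an immersion. The danger is that the limit degenerates: a boundary point of $M$ could slip into the interior of $M'$, or two sheets could fold together, destroying the covering argument. This is precisely the degeneration excluded by negative helicity. Writing $N\subseteq\partial M$ for the component on which $\om$ induces negative helicity, the scaling law of Definition \ref{def_helicity} gives $h\big(\phi_j(N),\,\cdot\,,\om'|_{\phi_j(N)}\big)=a_j^n\,h(N,\,\cdot\,,\om|_N)$, which stays bounded away from $0$; since helicity is a homological quantity (Lemma \ref{lemma_helicity_well_def}) it persists in the limit, so $\phi_\infty(N)$ cannot collapse into a measure-zero set and is pinned to $\partial M'$. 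Together with the volume-filling $V(\phi_j(M))=a_j^nV(M)\to V(M')$, which forces the limiting local multiplicity to equal $1$ almost everywhere and so precludes folding, this should yield the non-degeneracy and boundary-to-boundary behaviour used above. Making this limiting argument precise --- in particular, converting the persistence of negative helicity into the required non-degeneration of $\phi_\infty$ --- is the technical heart of the theorem. That it is the \emph{sign} of the helicity that matters is confirmed by Proposition \ref{prop:neg hel}, where the opposite sign defeats recognition, so no argument using volume alone can replace this step.
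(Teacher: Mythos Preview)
Your reduction to the statement ``$c_A(B)=c_B(A)=1$ forces $A\cong B$'' is exactly Lemma \ref{lemma_isomorphism}, so the architecture matches the paper. The gap is in your proposed proof of that lemma via \emph{attainment}. The sequence $(\phi_j)$ has no reason to subconverge: the constraint $\phi_j^*\om'=a_j\om$ does not bound the first derivatives, since already for $k=2$ the group of linear automorphisms of $(\R^{2n},\omst)$ is non-compact, so form-preserving embeddings can have arbitrarily large $C^1$-norm; there is no equicontinuity and hence no Arzel\`a--Ascoli argument. Your helicity heuristic also does not do what you claim: a boundary component with negative helicity can embed into the interior of $M'$ (this is precisely what happens in the left picture of Figure \ref{fig_embeddings}), so negative helicity does not ``pin'' $\phi_\infty(N)$ to $\partial M'$.

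The paper avoids any limiting procedure. From $c_A(B)\ge1$ and $c_B(A)\ge1$ one picks embeddings $\varphi:(M,a\om)\hookrightarrow(M',\om')$ and $\psi:(M',b\om')\hookrightarrow(M,\om)$ with $a,b$ close to $1$, and forms the \emph{self}-embedding $\chi=\psi\circ\varphi:(M,C\om)\hookrightarrow(M,\om)$, $C=ab$. The boundary components of $M\setminus\chi(\Int M)$ induce a partition $\mathcal{P}^\chi$ of two copies of $I=\pi_0(\partial M)$. The Key Lemma \ref{lemma_positive_helicity_partition} --- a finite combinatorial argument using Stokes' theorem for helicity --- shows that for $C$ sufficiently close to $1$ no partition element mixes positive- and negative-helicity components. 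Summing the helicity inequality \eqref{helicity_inequality} over the elements carrying negative-helicity components then yields $(1-C^n)\sum_{i\in I_-}h(i)\ge0$, which forces $C=1$ exactly, hence $a=b=1$. From there your volume argument applies verbatim to conclude that $\psi$ is surjective and hence a diffeomorphism. The negative-helicity hypothesis enters not to prevent an analytic degeneration but to make a finite inequality rigid.
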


This result answers Question \ref{q_recognition} affirmatively for every symplectic category satisfying the hypotheses of Theorem \ref{thm_recognition}. To our knowledge, this is the first result concerning Question \ref{q_recognition}, except for the results mentioned above. Note that Theorem \ref{thm_recognition} treats the more general setting of $(kn,k)$-form categories. This shows that recognition of objects by capacities is not a symplectic phenomenon.
\begin{rmk}[even degree] Under the hypotheses of Theorem \ref{thm_recognition}, the integer $k$ is even, provided that $\mathcal{C}$ is nonempty. This follows from the conditions that $n\geq2$ and $\om$ is maxipotent.
\end{rmk}
The compactness and the helicity hypotheses of Theorem \ref{thm_recognition} cannot be dropped. See Proposition \ref{prop:neg hel} and Remark \ref{rmk:cpt} below.

\begin{exple*}[recognition of objects] A category with a large class of objects to which Theorem \ref{thm_recognition} applies, is provided by the following full subcategory $\mathcal{C}=(\mathcal{O}, \mathcal{M})$ of $\Forms{kn}{k}$. Let $M$ be a compact $kn$-dimensional manifold, $\omega$ an exact maxipotent $k$-form on $M$, and $U \subseteq M$ a nonempty, open subset of $M$ whose boundary is a smooth submanifold of the interior of $M$ and lies on one side of $U$. The class $\mathcal{O}$ consisits of all $(M\setminus U, \omega|_{M\setminus U})$ obtained in this way, such that $M \setminus U$ is 1-connected. (The objects satisfy the negative helicity assumption by Stokes' Theorem for helicity, Lemma \ref{lemma_stokes_helicity} below.) Hence $\mathcal{C}$ satisfies the hypotheses of Theorem \ref{thm_recognition}. If $k=2$ then $\mathcal{O}$ consists of symplectic manifolds, and hence $\mathcal{C}$ is a symplectic category.
\end{exple*}

\begin{rmk}[capacities as a complete set of invariants] Let $\widehat{\mathcal{C}}$ be a locally small category determined by a formula and such that there exists a set of objects of $\widehat{\mathcal{C}}$ with the property that every object of $\widehat{\mathcal{C}}$ is isomorphic to an element of that set. Let $G$ be a group that is determined by a formula, and let $\rho$ denote a $G$-action on $\widehat{\mathcal{C}}$ that is determined by a formula. Let $\mathcal{C}$ be an invariant isomorphism-closed subcategory of $\widehat{\mathcal{C}}$.
	
	By a \emph{complete invariant} for $(\mathcal{C}, \rho)$, we mean a triple $(\mathcal{C}_0, \mathcal{C}_1, F)$, where $\mathcal{C}_0$ is an invariant full small subcategory of $\mathcal{C}$, such that every object of $\mathcal{C}$ is isomorphic to an object of $\mathcal{C}_0$, $\mathcal{C}_1$ is a small category equipped with a $G$-action determined by a formula, and $F: \mathcal{C}_0 \to \mathcal{C}_1$ is an equivariant and essentially injective\footnote{This means that for each pair of objects $A, B$ of $\mathcal{C}_0$ satisfying $F(A) \cong F(B)$, we have $A \cong B$.} functor. For two sets $X$ and $Y$, we denote by $Y^X$ the set of maps from $X$ to $Y$.

By a \emph{complete set of invariants} for $(\mathcal{C}, \rho)$, we mean a triple $(\mathcal{C}_0, \mathcal{C}_1, \mathcal{F})$, where $\mathcal{C}_0$ and $\mathcal{C}_1$ are as above, and $\mathcal{F}$ is a set of equivariant functors $\mathcal{C}_0 \to \mathcal{C}_1$, such that the following pair of maps is a complete invariant for $(\mathcal{C}, \rho)$:
	\begin{align*}
		\mathcal{C}_0 &\to \mathcal{C}_1^{\mathcal{F}}:=(\mathcal{O}_1^{\mathcal{F}}, \mathcal{M}_1^{\mathcal{F}}),\\
		\mathcal{O}_0 \ni A &\mapsto \big(F(A)\big)_{F \in \mathcal{F}} \in \mathcal{O}_1^{\mathcal{F}},\\
		\mathcal{M}_0 \ni (A, B, \varphi) &\mapsto \big(F(A), F(B), F(\varphi)\big)_{F \in \mathcal{F}} \in \mathcal{M}_1^{\mathcal{F}}.
	\end{align*}
	
	Now, we assume that $\mathcal{C}$ is a $(kn, k)$-form category, with $n \geq 2$ and $k$ even, and $\mathcal{C}_0$ is an invariant full small subcategory of $\mathcal{C}$, such that every object of $\mathcal{C}$ is isomorphic to an object of $\mathcal{C}_0$. We choose $\mathcal{C}_1$ to be the small category whose objects are the elements of $[0, \infty]$ and whose morphism set between $a, b \in [0, \infty]$ is $\{(a, b)\}$ iff $a \leq b$ and $\emptyset$ otherwise, and we choose $\mathcal{F}$ to be the set of capacities of the form $(\mathcal{C}_0, c)$, viewed as functors. We denote by $\rho$ the restriction of the $\R^+$-action on $\Forms{kn}{k}$ to $\mathcal{C}$. Theorem \ref{thm_recognition} means that $(\mathcal{C}_0, \mathcal{C}_1, \mathcal{F})$ is a complete set of invariants for $(\mathcal{C},\rho)$.
\demo
\end{rmk}

The idea of the proof of Theorem \ref{thm_recognition} is the following. Let $(M, \omega)$ and $(M', \omega')$ be two objects of $\mathcal{C}$ and assume that all capacities whose domain contains $\smfld$ and $\smfldp$ agree on $\smfld$ and $\smfldp$. Since the embedding capacities $c_{\smfld}$ and $c_{\smfldp}$ agree on $\smfld$ and $\smfldp$, there exist embeddings
\begin{equation*}
	 \varphi: (M, a\omega) \hookrightarrow \smfldp \text{ and } \psi: (M', b\omega') \hookrightarrow \smfld
\end{equation*}
for $a, b$ in $(0, 1]$ that are arbitrarily close to 1. The composition $\chi := \psi \circ \varphi$ then yields an embedding $(M, C\omega) \hookrightarrow \smfld$ with $C = ab$. 

Our assumption that $M$ has some negative helicity boundary component, Stokes' Theorem for helicity (Lemma \ref{lemma_stokes_helicity} below), and the fact that $C=ab$ is close to 1 imply that the constant $C$ is equal to 1.\footnote{More precisely, these facts imply the following. We denote by $I$ the set of connected components of the boundary of $M$. The embedding $\chi$ induces a partition of the set $I\sqcup I$. Two elements of $I\sqcup I$ lie in the same partition element iff they lie in the same path component of $M \setminus \chi(M)$. Here we identify each element of the first component of $I\sqcup I$ with its image under $\chi$. 

Every element of the partition containing a negative helicity component does not contain any positive helicity component. (See the Key Lemma \ref{lemma_positive_helicity_partition}.) Using Stokes' Theorem for helicity again, it follows that $C=1$.} This implies that $a = b =1$, which implies that $\varphi$ and $\psi$ are isomorphisms. This finishes the outline of the proof of Theorem \ref{thm_recognition}. Figure \ref{fig_embeddings} illustrates the possible kinds of embeddings of $(M, C\omega)$ into $(M, \omega)$ for spherical shells, and provides a pictorial idea of the main step of the proof.\\

\begin{figure}[ht]
	\centering
	
	\def\svgwidth{0.8\columnwidth}
	\import{figures/}{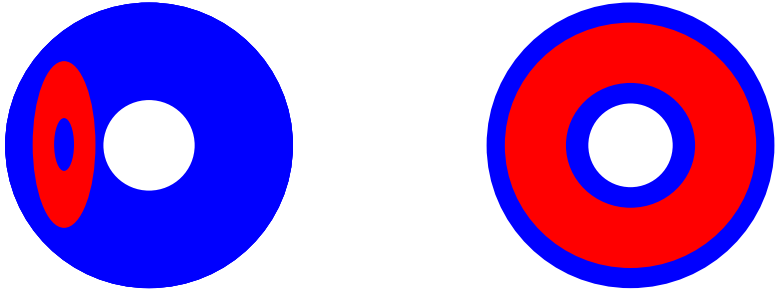}

	\caption{The two blue annuli depict the target spherical shell $M$ and the two red annuli the images of $M$ under two form-intertwining (i.e., symplectic, if $k=2$) embeddings $\chi = \psi \circ \varphi$. The embedding on the left only exists for small values of $C$, as the hole in the middle of $\chi(M)$ ``takes space''. The embedding on the right wraps the interior hole of the red shell around the interior hole of the blue shell. Using the equality $\chi^*\om=C^n\om$ and Stokes' Theorem for helicity, the volume of the thin blue shell around the central hole is $C^n -1 \geq 0$. On the other hand, by assumption, we have $C \leq 1$, which implies that $C=1$.}
	\label{fig_embeddings}
\end{figure}

Let $n\geq 2$ and $\mathcal{C}=(\mathcal{O},\mathcal{M})$ be a $(2n,2)$-form category. Recall that $B$ and $Z$ denote the open unit ball and cylinder. From this point on, by a \emph{normalized capacity}, we mean a $\{B, Z\}$-\emph{normalized} capacity. Our second main result states that the normalized capacities recognize objects under the hypotheses of Theorem \ref{thm_recognition} and one more assumption on $\mathcal{C}$:

\begin{prop}[recognition of objects through normalized capacities]\label{prop_recognition_normalized_capacities}
	 Assume that $B, Z \in \mathcal{O}$. Suppose also that every object $(M,\omega)\in\mathcal{O}$ satisfies the assumptions of Theorem \ref{thm_recognition} and the inequality
	\begin{equation}\label{ineq_thin_object}
		w(M, \omega) c_{(M, \omega)}(Z) < 1,\footnotemark
\end{equation}
\footnotetext{\label{evaluate}Recall the definitions of the embedding capacity $c_{(M, \omega)}$ and the Gromov width $w$ from Example \ref{exple_embedding_capacity}. Here we evaluate $c_{(M, \omega)}$ on a general symplectic manifold $(M',\om')$ by evaluating it on an object that is isomorphic to $(M',\om')$ and lies in the set $\mathcal{O}_0^{m, k}$, defined in \eqref{eq_subcat_beth1}.}
except if $(M,\om)$ is isomorphic to $B$ or $Z$. Then the normalized capacities recognize objects in $\mathcal{C}$.
\end{prop}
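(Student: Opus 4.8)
The plan is to build, for the given non-isomorphic pair, a $\{B,Z\}$-normalized capacity out of the Gromov width $w$ and a suitable embedding capacity, arranging that the normalization does not erase the distinction that the embedding capacities already provide. Throughout I use that $w=c_{(B,\omst|_B)}$ is itself normalized (so $w(B)=w(Z)=1$, by Example \ref{exple_embedding_capacity}), that the maximum of two capacities and positive rescalings of a capacity are again capacities (the monotonicity and conformality axioms are preserved), and that the inclusion $B\hookrightarrow Z$ yields $c_{(M,\omega)}(B)\le c_{(M,\omega)}(Z)$ for every domain $(M,\omega)$.

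First I record the basic normalizing construction. Let $A'=\smfldp$ be an object not isomorphic to $B$ or $Z$, so it satisfies the hypotheses of Theorem \ref{thm_recognition} and the inequality \eqref{ineq_thin_object}. Set $\mu:=c_{A'}(Z)$. Non-squeezing bounds any ball sitting inside a copy of $A'$ embedded in $Z$, which gives $\mu\le 1/w(A')<\infty$; I take $\mu\in(0,\infty)$ (the case $\mu=0$ forces $c_{A'}(B)=c_{A'}(Z)=0$ and is handled by the same recipe with $\tfrac1\mu c_{A'}$ replaced by a large positive multiple of $c_{A'}$). Define
\begin{equation*}
	\tilde c_{A'}:=\max\Big\{w,\ \tfrac{1}{\mu}\,c_{A'}\Big\}.
\end{equation*}
Then $\tilde c_{A'}(Z)=\max\{1,\mu/\mu\}=1$ and $\tilde c_{A'}(B)=\max\{1,c_{A'}(B)/\mu\}=1$, since $c_{A'}(B)\le\mu$; hence $\tilde c_{A'}$ is a normalized capacity. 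The role of \eqref{ineq_thin_object} is that $w(A')\mu<1$, i.e. $w(A')<1/\mu$, so the second term dominates at $A'$ and $\tilde c_{A'}(A')=1/\mu$. More generally, since $c_{A'}(A')=1$ (the identity together with the volume bound coming from maxipotence of $\omega'$), for any object $X$ with $w(X)<1/\mu$ one has $\tilde c_{A'}(X)=1/\mu$ if and only if $c_{A'}(X)=1$.

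Now fix non-isomorphic objects $A=\smfld$ and $A'=\smfldp$ with, say, $A'$ not isomorphic to $B$ or $Z$. If $A$ is isomorphic to $B$ or $Z$, then $w(A)=1$; I distinguish $A$ from $A'$ by $w$ when $w(A')\ne1$, and otherwise (so $w(A')=1$, whence $\mu<1$ by \eqref{ineq_thin_object}) by $\tilde c_{A'}$, since then $\tilde c_{A'}(A')=1/\mu>1=\tilde c_{A'}(A)$. If instead $A$ is also not isomorphic to $B$ or $Z$, I first try $w$: if $w(A)\ne w(A')$ we are done. So suppose $w(A)=w(A')$; this common value is $<1/\mu$ and $<1/c_A(Z)$ by \eqref{ineq_thin_object} applied to $A'$ and to $A$. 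Here I invoke the dichotomy underlying Theorem \ref{thm_recognition}: if both $c_{A'}(A)=1$ and $c_A(A')=1$ held, the resulting families of embeddings $(M,a\omega)\hookrightarrow\smfldp$ and $\smfldp\to(M',b\omega')\hookrightarrow\smfld$ would compose to self-embeddings $(M,C\omega)\hookrightarrow\smfld$ with $C$ arbitrarily close to $1$, which the helicity argument of Theorem \ref{thm_recognition} forces to be isomorphisms, contradicting $A\not\cong A'$. Hence $c_{A'}(A)\ne1$ or $c_A(A')\ne1$. In the first case $\tilde c_{A'}$ separates $A$ from $A'$ by the criterion above (using $w(A)=w(A')<1/\mu$); in the second case the symmetric capacity $\tilde c_A$ does.

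The main obstacle is the middle case $w(A)=w(A')$, where naive normalization by $\max\{w,\cdot\}$ threatens to collapse the difference between $A$ and $A'$: the crux is that \eqref{ineq_thin_object} is exactly the quantitative statement making the rescaled embedding capacity, rather than $w$, the dominant term at both $A$ and $A'$, so that the embedding-capacity dichotomy survives the normalization. Two subsidiary points to nail down are the finiteness and positivity of $\mu=c_{A'}(Z)$ (finiteness is non-squeezing; positivity, or its failure, must be treated as indicated above), and the bookkeeping that all capacities involved share a common small domain $\mathcal{C}_0$ and are evaluated on $A,A'$ through isomorphic representatives in $\mathcal{O}_0$, as in footnote \ref{evaluate}.
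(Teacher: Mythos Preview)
Your proof is correct and follows essentially the same approach as the paper. Your normalized capacity $\tilde c_{A'}=\max\{w,\tfrac1\mu c_{A'}\}$ is exactly the paper's $\bar c_{(M',\mu\omega')}=\max\{c_{(M',\mu\omega')},w\}$ (since $c_{(M',\mu\omega')}=\tfrac1\mu c_{(M',\omega')}$), and your dichotomy ``$c_{A'}(A)\ne1$ or $c_A(A')\ne1$'' is the contrapositive of the paper's Lemma \ref{lemma_isomorphism}. The only differences are organizational: you argue directly (build a distinguishing normalized capacity for each non-isomorphic pair) whereas the paper argues contrapositively (if all normalized capacities agree then the objects are isomorphic), and you package the key computation as the criterion ``$\tilde c_{A'}(X)=1/\mu\iff c_{A'}(X)=1$ whenever $w(X)<1/\mu$'' rather than unwinding it in each case.
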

\begin{rmks}[recognition of objects through normalized capacities]
\begin{itemize}
\item Examples of manifolds for which assumption \eqref{ineq_thin_object} holds are thin spherical shells, as explained in \cite[Corollary 5]{SZ12}.
\item The inequality \eqref{ineq_thin_object} is invariant in the sense that if it holds for an object $(M, \omega)$ in $\Forms{2n}{2}$ then it also holds for $(M, a\omega)$, for every constant $a \in (0, \infty)$.
\item By definition, no normalized capacity distinguishes between $B$ and $Z$. The result above states that the normalized capacities distinguish between all other objects and every object in the category.
\end{itemize}
\demo
\end{rmks}
The idea of the proof of Proposition \ref{prop_recognition_normalized_capacities} is similar to that of Theorem \ref{thm_recognition}. Here we replace the domain-embedding capacity $c_{(M, \omega)}$ by the maximum of a rescaling of $c_{(M, \omega)}$ and the Gromov width.

\subsection*{Organization of the article}
In Sections \ref{section_proof_thm} and \ref{section_proof_prop}, we prove the main results. In Section \ref{sec:nec} we show that the compactness and the negative helicity hypotheses cannot be dropped. Appendix \ref{section_subcat} deals with the set- and category-theoretic setting needed to defined capacities. In Appendix \ref{section_helicity} we go over some basic properties of helicity, such as Stokes' theorem for helicity. These properties are indirectly used in the proofs of the main results.

\subsection*{Acknowledgments} We would like to thank Vuka\v{s}in Stojisavljevi\'c for making us aware of \cite[Theorem 1.1]{EH96}, on which Proposition \ref{prop:neg hel} below is based. We thank Jaap van Oosten for a discussion about categories and ZFC. We would like to thank Felix Schlenk for an interesting correspondance.

\section{Proof of Theorem \ref{thm_recognition}}\label{section_proof_thm}

Let $k, n\in \N$ with $n\geq 2$. We will prove Theorem \ref{thm_recognition} by reducing it to the following lemma.
\begin{lemma}[sufficient criterion for recognition]\label{lemma_isomorphism} Let $M$ be a compact, 1-connected, $kn$-dimensional manifold and $\om$ an exact, maxipotent $k$-form on $M$, such that $\om$ induces negative helicity on some connected component of $\partial M$. Let $(M',\om')$ be another object with these properties. Assume that
\begin{equation}\label{eqn_embedding_capacity_isom}
c_{(M, \omega)}(M', \omega') \geq 1 \quad \text{ and } \quad c_{(M', \omega')}(M, \omega) \geq 1.\footnote{Compare to footnote \ref{evaluate}.}
\end{equation}
Then there is a diffeomorphism $\varphi: M \to M'$, such that $\varphi^*\omega' = \omega$.
\end{lemma}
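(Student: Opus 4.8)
The plan is to establish the isomorphism by chaining the two embeddings from the capacity hypotheses and using the negative-helicity rigidity to force the scaling constant to be $1$. By the definition of the embedding capacity in \eqref{eqn_def_embedding_capacity}, the hypothesis $c_{(M,\omega)}(M',\omega')\geq 1$ gives, for every $a\in(0,1)$, a $\Forms{kn}{k}$-morphism $\varphi_a:(M,a\omega)\hookrightarrow(M',\omega')$; symmetrically, $c_{(M',\omega')}(M,\omega)\geq 1$ gives embeddings $\psi_b:(M',b\omega')\hookrightarrow(M,\omega)$ for every $b\in(0,1)$. Composing, $\chi := \psi_b\circ(\text{rescaling of }\varphi_a)$ produces a form-intertwining embedding $(M,C\omega)\hookrightarrow(M,\omega)$ with $C=ab$ arbitrarily close to $1$ from below. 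The central task is to show that such a self-embedding with $C\leq 1$ is in fact volume-preserving, i.e. $C=1$.

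**The key step** is the volume/helicity argument sketched around Figure \ref{fig_embeddings}. Since $\chi^*\omega = C\,\omega$ (after absorbing the scalings), we have $\chi^*(\omega^{\wedge n}) = C^n\,\omega^{\wedge n}$, so $\chi$ shrinks the volume form by a factor $C^n\leq 1$. I would analyze the region $M\setminus\chi(M)$ and its boundary. The boundary of $\chi(M)$ consists of the image under $\chi$ of $\partial M$, and the boundary of $M$ itself contributes as well. Applying Stokes' Theorem for helicity (Lemma \ref{lemma_stokes_helicity}) component-by-component, the total $\omega^{\wedge n}$-volume of $M\setminus\chi(M)$ is expressed as a difference of helicities of boundary components. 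The crucial input is the Key Lemma \ref{lemma_positive_helicity_partition}: in the partition of $\partial M \sqcup \partial M$ induced by $\chi$, every partition class containing a negative-helicity component contains no positive-helicity component. This prevents the negative boundary helicity of the domain from being ``cancelled'' by wrapping around a positive-helicity hole, and forces the volume of $M\setminus\chi(M)$ to equal $C^n-1$ times a positive quantity while simultaneously being nonnegative, yielding $C^n\geq 1$. Combined with $C\leq 1$ this gives $C=1$.

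**From $C=1$** I conclude $a=b=1$ is achievable in the limit, so that $\chi$ preserves $\omega^{\wedge n}$ and hence both $\varphi_a$ and $\psi_b$ are volume-preserving in the limit; a volume-preserving embedding of a compact connected manifold into another of equal total volume is surjective, hence a diffeomorphism intertwining the forms. Taking $a=1$ exactly (or passing to the limit and using compactness/properness of the embeddings together with the fact that equality of volumes forces the image to be all of $M'$) produces the desired diffeomorphism $\varphi:M\to M'$ with $\varphi^*\omega'=\omega$.

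**The main obstacle** I anticipate is the passage from the family of embeddings $\varphi_a$ for $a<1$ to an actual isomorphism at $a=1$: the supremum in the capacity need not be attained, so one cannot simply set $a=1$. I expect to handle this by first running the helicity argument to pin down $C=1$ for the composition (which is robust under taking $a,b\to 1$), and then arguing that a $C=1$ self-embedding of $M$ is surjective; a careful bookkeeping of boundary components and the orientation conventions in Definition \ref{def_helicity}, together with the exactness of $\omega$ needed to invoke helicity at all, is where the technical care will concentrate.
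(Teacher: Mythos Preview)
Your overall strategy is the paper's: compose the two embeddings to obtain $\chi:(M,C\omega)\hookrightarrow(M,\omega)$ with $C=ab$, invoke Lemma~\ref{lemma_positive_helicity_partition} together with Lemma~\ref{lemma_helicity_inequality} to force $C=1$, and finish with a volume argument. The gap is exactly where you anticipate it, but your proposed fix is wrong. You take $a,b<1$ strictly, so $C=ab<1$; the helicity argument then yields $C\geq1$. That is a contradiction, not the conclusion ``$a=b=1$ in the limit''. Your suggested rescue---passing to a limit along $a_j,b_j\to1$ using compactness/properness---does not work: there is no compactness theorem for form-preserving embeddings that would manufacture an embedding at $a=1$ from such a sequence.

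The resolution is simpler and needs no limit. A volume comparison shows that any admissible $a$ satisfies $a^n\int_M\omega^{\wedge n}\leq\int_{M'}(\omega')^{\wedge n}$, and symmetrically for $b$; together with both capacities being $\geq1$ this forces the two volumes to coincide, so every admissible $a$ and $b$ already lies in $(0,1]$. Now fix concrete $a,b\in(\sqrt{C_0},1]$, as the paper does. Then $C=ab\in(C_0,1]$, the Key Lemma applies, and summing \eqref{helicity_inequality} over the partition classes meeting a negative-helicity component gives $(1-C^n)\sum_{i\in I_-}h(i)\geq0$; since that sum is strictly negative one gets $C\geq1$, hence $C=1$. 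From $a,b\leq1$ and $ab=1$ it follows that $a=b=1$ \emph{exactly}. Thus $\chi:(M,\omega)\hookrightarrow(M,\omega)$ is form-preserving, $M\setminus\chi(M)$ is open of volume zero and hence empty, so $\psi$ is surjective and is the required form-preserving diffeomorphism. The argument itself shows the supremum in the embedding capacity is attained; no limiting procedure is required or available.

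One smaller correction: the total volume of $M\setminus\chi(M)$ equals $(1-C^n)\int_M\omega^{\wedge n}\geq0$, which gives only the easy inequality $C\leq1$. The opposite inequality $C\geq1$ does not come from the whole complement but from summing the helicity inequality over just those path-components of $M\setminus\chi(\Int M)$ bordering negative-helicity pieces; Lemma~\ref{lemma_positive_helicity_partition} is precisely what guarantees no positive-helicity term enters those particular sums.
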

For a proof, see page \pageref{proof_lemma_isomorphism}.

\begin{proof}[Proof of Theorem \ref{thm_recognition}]
	Assume that $(M, \omega)$ and $(M', \omega')$ are two objects of $\mathcal{C}$, such that all capacities whose domains contain $(M, \omega)$ and $(M', \omega')$, agree on $(M, \omega)$ and $(M', \omega')$. We show that $(M, \omega)$ and $(M', \omega')$ are isomorphic. It follows from our assumption that
	\begin{align*}
		& c_{(M, \omega)}(M', \omega') = c_{(M, \omega)}(M, \omega),\\
		& c_{(M', \omega')}(M, \omega) = c_{(M', \omega')}(M', \omega').\end{align*}

	Since $c_{(M, \omega)}(M, \omega) \geq 1$ and $c_{(M', \omega')}(M', \omega') \geq 1$, it follows that the hypothesis \eqref{eqn_embedding_capacity_isom} of Lemma \ref{lemma_isomorphism} holds. Applying this lemma, it follows that $(M, \omega)$ and $(M', \omega')$ are isomorphic in $\Forms{kn}{k}$. Since $\mathcal{C}$ is isomorphism-closed, they are also isomorphic in $\mathcal{C}$. This proves Theorem \ref{thm_recognition}.
\end{proof}

For the proof of Lemma \ref{lemma_isomorphism} we need the following. Let $\varphi: M \to M'$ be an embedding between two topological manifolds. We denote by $\Int M$ the manifold interior of $M$ and
\begin{align*}
	&I := I_M := \{\text{connected components of }\partial M\},\\
	&I' := I_{M'},\\
	& P := M' \setminus \varphi(\mathrm{Int}(M)).
\end{align*}

For every pair of sets $X,Y$ we denote by 
\[X\sqcup Y:=\big\{(0,x)\,\big|\,x\in X\big\}\cup\big\{(1,y)\,\big|\,y\in Y\big\}\]
the disjoint union of $X$ and $Y$.\footnote{When no confusion is possible, we identify $\{0\}\times X$ with $X$ and $\{1\}\times Y$ with $Y$ to ease the notation.} We denote by
\begin{equation*}
	\psi: I \sqcup I'\enspace\to \mathcal{P}(P)
\end{equation*}
the map induced by $\varphi$ on $I$ and given by the inclusion on $I'$. We define a partition \[\mathcal{P}^{\varphi}\] on $I \sqcup I'$ by declaring that two boundary components $i, j \in  I \sqcup I'$ lie in the same element of $\mathcal{P}^{\varphi}$ if and only if there is a continuous path in $P$ that starts in $\psi(i)$ and ends in $\psi(j)$.

Let $M$ be a $kn$-dimensional (smooth)  manifold with boundary, $O$ an orientation on $M$, and $\omega$ an exact $k$-form on $M$. For every $i \in I_M$, we denote by $O_i$ the induced orientation on $i$ and by $\omega_i$ the pullback of $\omega$ to $i$.

Assume that $M$ is compact. Recall Definition \ref{def_helicity} of helicity.
\begin{definition*}[boundary helicity] We define the \emph{boundary helicity} of $(M, O, \omega)$ to be the map $h_{M, O, \omega}:I_M\to \mathbb{R}$ given by
	\begin{equation*}
		h_{M, O, \omega} (i) := h(i, O_i, \omega_i).
	\end{equation*}
\end{definition*}

\begin{lemma}[helicity inequality]\label{lemma_helicity_inequality} Let $M$ and $M'$ be compact $kn$-dimensional smooth manifolds, and  $\omega$ and $\omega'$ be maxipotent\footnote{This implies that $k$ is even.} exact $k$-forms on $M$ and $M'$, respectively. Let $C$ be a positive real number and $\varphi: M \to M'$ a smooth orientation-preserving  embedding that intertwines $C\omega$ and $\omega'$. We denote by $O$ and $O'$ the orientations of $M$ and $M'$ induced by $\omega$ and $\omega'$. Then for every $J \in \mathcal{P}^{\varphi}$, the following inequality holds:
	\begin{equation}\label{helicity_inequality}
		-C^n \sum_{i \in J \cap I}h_{M, O, \omega}(i) + \sum_{i' \in J \cap I'}h_{M', O', \omega'}(i') \geq 0.
	\end{equation}
\end{lemma}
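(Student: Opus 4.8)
The plan is to realize the left-hand side of \eqref{helicity_inequality} as a helicity-type boundary integral over a single compact piece of $M'$, and then to convert it, via Stokes' Theorem for helicity (Lemma \ref{lemma_stokes_helicity}), into the integral of a volume form, which is nonnegative by maxipotency.

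First I would fix $J\in\mathcal{P}^{\varphi}$ and let $W$ be the connected component of $P=M'\setminus\varphi(\Int M)$ that contains $\psi(i)$ for every $i\in J$, equipped with the orientation $O'$ inherited from $M'$. Since $M$ is compact, $\varphi(M)$ is closed in $M'$ with topological frontier $\varphi(\partial M)$, so $W$ is a compact oriented $kn$-manifold whose boundary, carrying the induced orientation, is --- up to a set of measure zero --- the disjoint union
\begin{equation*}
\bigsqcup_{i'\in J\cap I'}(i',O'_{i'})\ \sqcup\ \bigsqcup_{i\in J\cap I}\big(\varphi(i),-O^{\varphi}_{i}\big),
\end{equation*}
where $O'_{i'}$ denotes the boundary orientation of $M'$ on $i'$ and $O^{\varphi}_i$ the boundary orientation of $\varphi(M)$ on $\varphi(i)$; the minus sign records that $W$ lies on the side of $\varphi(i)$ away from $\varphi(M)$. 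By the definition of $\mathcal{P}^{\varphi}$, precisely the components indexed by $J$ occur in $\partial W$.

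Next I would fix a primitive $\alpha'$ of $\omega'$ on $M'$ and apply Lemma \ref{lemma_stokes_helicity} to $(W,O',\omega')$ to get
\begin{equation*}
\int_{W,O'}(\omega')^{\wedge n}=\int_{\partial W}\alpha'\wedge(\omega')^{\wedge(n-1)},
\end{equation*}
and then evaluate the right-hand side component by component. Since $\alpha'|_{i'}$ is a primitive of $\omega'_{i'}$, Definition \ref{def_helicity} and Lemma \ref{lemma_helicity_well_def} show that each $i'\in J\cap I'$ contributes $+h_{M',O',\omega'}(i')$. For $i\in J\cap I$, pulling back along the orientation-preserving map $\varphi$ and using $\varphi^*\omega'=C\omega$ gives
\begin{equation*}
\int_{\varphi(i),O^{\varphi}_i}\alpha'\wedge(\omega')^{\wedge(n-1)}=C^{n-1}\int_{i,O_i}(\varphi^*\alpha')\wedge\omega^{\wedge(n-1)};
\end{equation*}
as $d(\varphi^*\alpha')=\varphi^*\omega'=C\omega$, the form $\tfrac1C\varphi^*\alpha'$ is a primitive of $\omega_i$, so the last integral equals $C\,h_{M,O,\omega}(i)$ and the displayed quantity equals $C^n h_{M,O,\omega}(i)$. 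With the opposite orientation each $i\in J\cap I$ therefore contributes $-C^n h_{M,O,\omega}(i)$. Summing the contributions and using that $\int_{W,O'}(\omega')^{\wedge n}\ge 0$ --- because $(\omega')^{\wedge n}$ is a positive volume form for $O'$ by maxipotency --- would give exactly \eqref{helicity_inequality}.

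The hard part will be justifying the claim about $\partial W$: since $\varphi$ need not respect boundaries, $\varphi(\Int M)$ may meet $\partial M'$ and $\varphi(\partial M)$ may meet $\partial M'$ non-transversally. I expect the cleanest route is first to reduce to the case $\varphi(\Int M)\subseteq\Int M'$ with $\varphi(\partial M)$ meeting $\partial M'$ transversally, using the room afforded by maxipotency to push $\varphi$ off $\partial M'$ without altering the partition $\mathcal{P}^{\varphi}$ or the boundary helicities; the resulting corners, having codimension two, do not affect Stokes' Theorem. Checking that after this reduction every $i'\in J\cap I'$ survives as a full boundary component of $W$ and that no spurious pieces of $\partial M'$ intrude into $\partial W$ is the step I expect to demand the most care.
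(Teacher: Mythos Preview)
Your approach is correct and is exactly the one the paper sketches in the remark following the lemma: the left-hand side of \eqref{helicity_inequality} is the $(\omega')^{\wedge n}$-volume of the path-component of $P$ corresponding to $J$, obtained via Stokes' Theorem for helicity (Lemma \ref{lemma_stokes_helicity}) together with the naturality and scaling properties in Remark \ref{rmk_helicity}, and this volume is nonnegative by maxipotency. The paper itself does not carry out the argument but cites \cite[Lemma 49]{JZ21}, where the boundary/corner issue you correctly flag is handled.
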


\begin{proof}
This is Lemma 49 in \cite{JZ21}.
\end{proof}

\begin{rmk}The idea of the proof of this lemma is that for every $J \in \mathcal{P}^{\varphi}$, the left-hand side of inequality \eqref{helicity_inequality} is the volume of a path-component of $P$. This follows from Stokes' theorem for helicity (Lemma \ref{lemma_stokes_helicity}) and Remark \ref{rmk_helicity}.
	\demo
\end{rmk}

For the remainder of this section, we will use the notation \[\varphi: (M, \omega) \hookrightarrow (M', \omega')\] to denote an embedding of $M$ into $M'$ that intertwines $\omega$ and $\omega'$. A crucial ingredient of the proof of Lemma \ref{lemma_isomorphism} is the following.

\begin{lemma}[key lemma, separation of positive and negative helicity components]\label{lemma_positive_helicity_partition} Let $M$ be a nonempty, compact, 1-connected $kn$-dimensional manifold with nonempty boundary, and $\omega$ be a maxipotent exact $k$-form on $M$.  Then there exists a real number $C_0 \in (0, 1)$, such that for every $C \in (C_0, 1]$ and every embedding $\varphi: (M, C\omega)\hookrightarrow (M, \omega)$, the partition $\mathcal{P}^{\varphi}$ of $I \sqcup I$ has the following property. Every element of $\mathcal{P}^{\varphi}$ containing a negative helicity component does not contain any positive helicity component.
\end{lemma}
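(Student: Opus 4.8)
The plan is to combine the helicity inequality (Lemma \ref{lemma_helicity_inequality}) with a topological analysis of the complement $P = M \setminus \varphi(\Int M)$ forced by $1$-connectedness, followed by a rigidity argument valid for $C$ close to $1$. Throughout, for a block $J \in \mathcal{P}^{\varphi}$ I write $a_J := \sum_{i\in J\cap I} h_{M,O,\omega}(i)$ and $b_J := \sum_{i'\in J\cap I'} h_{M,O,\omega}(i')$, where $O$ is the orientation induced by $\omega$.

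First I would record the quantitative input. Since $\omega$ is exact and maxipotent, $\omega^{\wedge n}$ is a volume form inducing $O$, and Stokes' theorem for helicity gives $\operatorname{Vol}(M) = \int_{M,O}\omega^{\wedge n} = \sum_{i\in I} h_{M,O,\omega}(i) =: V > 0$. For the embedding $\varphi:(M,C\omega)\hookrightarrow(M,\omega)$ we have $\varphi^*(\omega^{\wedge n}) = C^n\omega^{\wedge n}$, so $\varphi$ preserves $O$ and $\operatorname{Vol}(\varphi(\Int M)) = C^n V$, whence $\operatorname{Vol}(P) = (1-C^n)V$. Lemma \ref{lemma_helicity_inequality} and the remark following it then identify $b_J - C^n a_J$ with the volume of the path-component of $P$ indexed by $J$; in particular $b_J - C^n a_J \geq 0$ for every $J$ and $\sum_J (b_J - C^n a_J) = (1-C^n)V$.

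The crux is the structure of $P$. Writing $M = \varphi(M)\cup P$ with $\varphi(M)\cap P = \varphi(\partial M)$ and applying the Mayer--Vietoris sequence together with $H_1(M)=0$ and the connectedness of $M$ and of $\varphi(M)$, I expect $P$ to have exactly $|I|$ path-components. I would then consider the graph whose vertices are $\varphi(M)$ and the path-components of $P$, with one edge for each of the $|I|$ components of $\varphi(\partial M)$, joining $\varphi(M)$ to the adjacent component of $P$. This graph is connected with $|I|+1$ vertices and $|I|$ edges, hence a tree; since every edge is incident to the single vertex $\varphi(M)$, it is a star. Consequently each path-component of $P$ meets exactly one component of $\varphi(\partial M)$, i.e. $|J\cap I| = 1$ for every block $J$, so that $a_J = h_{M,O,\omega}(i(J))$ is a \emph{single} helicity value, where $J\mapsto i(J)$ is a bijection from the blocks onto $I$. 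This step, extracting the star structure from $1$-connectedness, is the main obstacle: it is what upgrades the merely additive quantity $a_J$ into per-component control, without which the rigidity below would be too weak to conclude.

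Finally I would run the rigidity argument. From $0 \le b_J - C^n a_J \le (1-C^n)V$ and $|a_J|\le H := \max_i |h_{M,O,\omega}(i)|$ one gets $|b_J - a_J| \le (1-C^n)(V+H)$. Both $a_J$ and $b_J$ lie in the finite set of subset-sums of $\{h_{M,O,\omega}(i)\}_{i\in I}$, so choosing $C_0$ with $(1-C_0^n)(V+H)$ smaller than the least gap between distinct such sums forces $a_J = b_J$ for all $J$ whenever $C \in (C_0,1)$. Then $b_J - C^n a_J = (1-C^n)a_J \ge 0$ gives $a_J = h_{M,O,\omega}(i(J)) \ge 0$; since $i(J)$ ranges over all of $I$, every boundary component has non-negative helicity, so no negative-helicity component exists and the assertion holds vacuously for $C \in (C_0,1)$. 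For $C=1$ the map $\varphi$ is volume-preserving, hence a diffeomorphism of $M$ with $\varphi^*\omega=\omega$; it permutes the boundary components preserving helicity, and each block has the form $\{(1,i'),(0,\varphi^{-1}(i'))\}$ with $h_{M,O,\omega}(i') = h_{M,O,\omega}(\varphi^{-1}(i'))$, so no block mixes positive and negative components. This establishes the separation property for all $C\in(C_0,1]$.
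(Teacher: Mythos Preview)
Your argument is correct and proves the lemma, but by a genuinely different route than the paper's, and in fact you establish something a bit stronger along the way.

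The paper proceeds as follows. It quotes $|J\cap(\{0\}\times I)|=1$ from \cite{JZ21} (its Lemma~\ref{lemma_property_partition}), then defines $C_0=\max(C_1,C_2)$ in terms of specific pairwise ratios and differences of the helicities. For $C>C_1$ it shows that each block $J_{i_+}$ with $i_+\in I_+$ contains exactly one target-side positive component $f(i_+)$, that $f:I_+\to I_+$ is a bijection, and that $h(f(i_+))=h(i_+)$. For $C>C_2$ it then uses \eqref{eqn_proof_positive_pairing_1} once more to exclude negative target-side components from $J_{i_+}$. The conclusion is the separation property, with no claim that negative components are absent.

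Your approach replaces this combinatorial pairing by a single ``gap'' argument on the finite set of subset-sums of $\{h(i)\}_{i\in I}$: from $0\le b_J-C^na_J\le(1-C^n)V$ you force $a_J=b_J$ for $C$ close to $1$, hence $a_J\ge0$, and since $a_J$ runs through all individual helicities you conclude that \emph{every} boundary helicity is non-negative whenever an embedding with $C\in(C_0,1)$ exists. This is strictly stronger than the separation statement (it renders it vacuous), and it essentially absorbs the step in the proof of Lemma~\ref{lemma_isomorphism} where one sums \eqref{helicity_inequality} over the negative blocks to obtain $C=1$. The price is a less explicit $C_0$ (a subset-sum gap rather than the paper's $C_1,C_2$), but the argument is shorter and avoids constructing the permutation $f$.

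One remark: your Mayer--Vietoris/star-graph derivation of $|J\cap I|=1$ is the same statement as the paper's Lemma~\ref{lemma_property_partition}; the paper simply cites it from \cite{JZ21}. Your sketch is fine at the level of ideas, but the collar and boundary-incidence technicalities (e.g.\ when $\varphi(\partial M)$ meets $\partial M$) are exactly what that reference handles, so you could equally well invoke it.
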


The proof of this lemma is based on Lemma \ref{lemma_helicity_inequality} (helicity inequality). We use that the positive helicity boundary components in the domain and the target come in pairs that lie in the same partition element and have the same helicity. (See page \pageref{proof_lemma_positive_helicity_partition}.) 

In the following, we denote
\begin{align*}I_+&:=\big\{\text{element of }I \text{ with (strictly) positive helicity}\big\},\\
I_-&:=\big\{\text{element of }I \text{ with negative helicity}\big\},\\
I_0&:=\big\{\text{element of }I \text{ with zero helicity}\big\}.
\end{align*}
We use the notation
\begin{equation*}
h:=h_{M, O, \omega}: I \to \R,\qquad h:=h_{M, O, \omega} \sqcup h_{M, O, \omega}: I \sqcup I \to \R
\end{equation*}
for both the boundary helicity map on $I$ and the induced map on the disjoint union of two copies of $I$.

\begin{proof}[Proof of Lemma \ref{lemma_isomorphism}]\label{proof_lemma_isomorphism}
	We choose $C_0$ as in the conclusion of Lemma \ref{lemma_positive_helicity_partition}, applied to $(M, \omega)$. By the inequalities in \eqref{eqn_embedding_capacity_isom}, there exist $a$ and $b$ in $(\sqrt{C_0}, 1]$ and embeddings
	\begin{equation*}
		\varphi: (M, a\omega) \hookrightarrow (M', \omega'),\qquad \psi: (M', b\omega') \hookrightarrow (M, \omega).
	\end{equation*}
	The map $\chi:=\psi \circ \varphi$ is an embedding of $M$ into itself, and it intertwines the forms $ab\omega$ and $\omega$.
	
	Since $C:=ab > C_0$, Lemma \ref{lemma_positive_helicity_partition} implies that each element of $\mathcal{P}^{\chi}$ that contains some negative helicity element does not contain any positive helicity element. Thus, summing inequality \eqref{helicity_inequality} from Lemma \ref{lemma_helicity_inequality} over all these partition elements yields
\begin{equation*}-C^n\sum_{i\in \{0\}\times I_-}h(i)+\sum_{i'\in \{1\}\times I_-}h(i') \geq 0.
\end{equation*}
Each sum is equal to $\sum_{i \in I_-}h(i)$, hence we obtain
\begin{equation}\label{eqn_negative_helicity_sum}
(1-C^n)\sum_{i \in I_-}h(i) \geq 0.
\end{equation}
Since, by assumption, $M$ has some boundary component with negative helicity, we have that $\sum_{i \in I_-}h(i) < 0$. Using inequality \eqref{eqn_negative_helicity_sum}, it follows that $C^n \geq 1$, and hence $C \geq 1$. Since $C\leq 1$ by definition, it follows that $C = ab = 1$. Because $a, b \in (0, 1]$, it follows that $a = b =1$. Hence we have
\[\chi:= \psi \circ \varphi: (M, \omega) \hookrightarrow (M, \omega).\]
	
	Since $M$ is compact, the image $\chi(M)$ is also compact and thus closed. Hence $M\setminus \chi(M)$ is open. Since $\chi$ is volume-preserving, the set $M \setminus \chi(M)$ has volume zero, which implies that it is empty. We conclude that $\chi$ and therefore $\psi$ is surjective. Hence $\psi$ is a diffeomorphism between $M$ and $M'$ that intertwines $\om$ and $\om'$. This concludes the proof of Lemma \ref{lemma_isomorphism}.
\end{proof}

In the proof of Lemma \ref{lemma_positive_helicity_partition} we will use the following. Let $M$ and $M'$ be topological manifolds and $\varphi: M \to M'$ a topological embedding.

\begin{lemma}[partition induced by an embedding]\label{lemma_property_partition} Assume that $M$ and $M'$ are compact, connected, and of the same dimension, and that $M \neq \emptyset$. Suppose also that $M'$ is 1-connected. Then for every $J \in \mathcal{P}^{\varphi}$ we have $\big|J\cap(\{0\}\x I)\big| =1$.
\end{lemma}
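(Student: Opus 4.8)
The plan is to reduce the statement to a computation in degree-zero singular homology, in which the hypothesis that $M'$ is $1$-connected enters decisively. Throughout I identify $\pi_0(\partial M)$ with $I$ and $\pi_0(P)$ with the set of path components of $P$.

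First I would pin down the topology of the complement. Since $\varphi$ is a topological embedding and $\dim M = \dim M'$, invariance of domain gives $\varphi(\Int M) \subseteq \Int M'$ and shows that $U := \varphi(\Int M)$ is open in $M'$. Hence $P = M' \setminus U$ is compact, the frontier of $U$ in $M'$ equals $\varphi(\partial M)$, and
\[ M' = \varphi(M) \cup P, \qquad \varphi(M) \cap P = \varphi(\partial M). \]
Moreover $\varphi$ restricts to homeomorphisms $M \cong \varphi(M)$ and $\partial M \cong \varphi(\partial M)$.

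Next I would thicken this closed decomposition to an honest open cover. By the collaring theorem there is a collar $\kappa\colon \partial M \times [0,1) \to M$ onto a neighborhood of $\partial M$; writing $M_{1/2} := M \setminus \kappa(\partial M \times [0,1/2))$, I set $A := \varphi(\Int M)$ and $B := M' \setminus \varphi(M_{1/2})$. These are open, cover $M'$, and satisfy $A \simeq M$ and $A \cap B \simeq \partial M$ (an open collar), while $B$ deformation retracts onto $P$ by collapsing the collar onto $\varphi(\partial M)$, so $B \simeq P$. Under these identifications the inclusion $A \cap B \hookrightarrow B$ realizes on path components precisely the map
\[ f\colon I = \pi_0(\partial M) \to \pi_0(P), \qquad f(c) = \text{the path component of } P \text{ containing } \varphi(c). \]
This $f$ is the combinatorial content of the partition: by definition $(0,i),(0,j)$ lie in the same block of $\mathcal{P}^{\varphi}$ iff $f(i)=f(j)$, and a block $J$ with associated path component $Q$ meets $\{0\}\times I$ in $f^{-1}(Q)\times\{0\}$.

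The key step is the reduced Mayer--Vietoris sequence for $\{A,B\}$. Using that $M$ and $M'$ are connected (so $\wt H_0(M)=\wt H_0(M')=0$) and that $M'$ is $1$-connected (so $H_1(M')=0$), the sequence collapses to an isomorphism $\beta_*\colon \wt H_0(\partial M)\xrightarrow{\ \cong\ }\wt H_0(P)$ induced by the inclusion $\varphi(\partial M)\hookrightarrow P$. Since singular $H_0$ of any space is free abelian on its path components, $\beta_*$ realizes $f$, and such an isomorphism is equivalent to $f$ being a bijection: injectivity of $\beta_*$ rules out any coincidence $f(c_1)=f(c_2)$ with $c_1\neq c_2$ (which would put the nonzero class $[\varphi(c_1)]-[\varphi(c_2)]$ in $\ker\beta_*$), and equality of ranks forces $|\pi_0(P)|=|I|<\infty$, so an injective $f$ between equal finite sets is bijective.

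Finally I would translate back: bijectivity of $f$ means every path component $Q$ of $P$ contains exactly one image $\varphi(i)$, hence every such $Q$ is the path component of some block $J$ and $|J\cap(\{0\}\times I)| = |f^{-1}(Q)| = 1$, as claimed. I expect the main obstacle to be the rigorous passage from the closed decomposition to a Mayer--Vietoris sequence: because $\varphi$ is only a topological embedding, $\varphi(\partial M)$ need not be bicollared in $M'$, so one cannot collar on the $P$-side; the remedy is to thicken using only the collar of $\partial M$ inside $M$, which always exists, and this is the step demanding care. By contrast the hypotheses enter transparently in the homology computation, connectedness giving surjectivity of $f$ and $1$-connectedness giving injectivity; the embedding $S^1\times[0,1]\hookrightarrow T^2$ shows that injectivity genuinely fails without the latter.
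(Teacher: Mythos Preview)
Your proof is correct. The paper itself does not prove this lemma: it merely cites the proof of Lemma~47(i) in \cite{JZ21}, so no direct comparison of methods is possible from the paper alone. Your reduced Mayer--Vietoris argument is the natural approach and is carried out carefully; in particular, you correctly identify and resolve the one genuine subtlety, namely that $\varphi(\partial M)$ need not be bicollared in $M'$, by thickening only on the $M$-side using the (topological) collaring theorem for $\partial M\subseteq M$. The passage from the isomorphism $\widetilde H_0(\partial M)\xrightarrow{\ \cong\ }\widetilde H_0(P)$ to bijectivity of $f\colon I\to\pi_0(P)$ is sound, and surjectivity of $f$ then guarantees that \emph{every} block $J\in\mathcal{P}^\varphi$ (not only those a priori meeting $\{0\}\times I$) satisfies $|J\cap(\{0\}\times I)|=1$, which is what the lemma asserts.
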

\begin{proof}[Proof of Lemma \ref{lemma_property_partition}]
	This follows from the proof of Lemma 47(i) in \cite{JZ21}\footnote{In this lemma, it is assumed that $\partial M' \neq \emptyset$. However, this is not necessary.}.
\end{proof}

\begin{definition}[partition element containing some boundary component]\label{defi:J i} For every $i\in I$, we denote by
\[J_i:=J_i^{\varphi}\]
the element of $\mathcal{P}^{\varphi}$ containing $(0,i)$.
\end{definition}
Since $\mathcal{P}^{\varphi}$ is a partition, for every $i \in I$, there is only one $J \in \mathcal{P}^{\varphi}$ containing it. Hence $J_i$ is well-defined.

\begin{proof}[Proof of Lemma \ref{lemma_positive_helicity_partition}]\label{proof_lemma_positive_helicity_partition}
	Set
	\begin{align}
		\begin{split}\label{def_C1}
		C_1 := \max\big\{C \in [0, 1] \,\big|\,&\exists i_+, i_+' \in I_+:\\
		&h(i_+)>h(i_+')\text{ and }h(i_+') \geq C^nh(i_+)\big\},
		\end{split}
	\end{align}
	\begin{align}
		\begin{split}\label{def_C2}
			C_2 := \max\big\{C \in [0, 1] \,\big|\,&\exists i_+\in I_+, i_-\in I_-:\\
			&(1-C^n)h(i_+) \geq -h(i_-)\},
		\end{split}
	\end{align}
\[C_0 := \max\left\{C_1, C_2\right\}.\]
Since the set $I$ is finite, these numbers are well-defined, $C_1 < 1$, $C_2 < 1$, and hence $C_0 <1$. Let $C$ be in $(0,1]$ and
\[\varphi: (M, C\omega)\hookrightarrow (M, \omega).\]
We prove Lemma \ref{lemma_positive_helicity_partition} by showing that if $C>C_0$ and $J\in \mathcal{P}^{\varphi}$ contains a positive element, then it does not contain any negative element.
	
Let $i_+ \in I_+$. By Lemma \ref{lemma_property_partition}, we have $J_{i_+}\cap \left(\{0\}\times I\right) = \left\{(0, i_+)\right\}$. Using Lemma \ref{lemma_helicity_inequality}, it follows that
	\begin{equation}\label{eqn_proof_positive_pairing_1}-C^nh(i_+)+\sum_{i\in I:\,(1,i)\in J_{i_+}}h(i)\geq0.
	\end{equation}
We define
\begin{equation}\label{eq:I i+}I_{i_+}:= \left\{i_+' \in I_+ \,\big|\, (1, i_+')\in J_{i_+}\right\}.\end{equation}
\begin{claim}\label{claim_cardinality_I_{i_+}} We have $\left|I_{i_+}\right| =1$.
\end{claim}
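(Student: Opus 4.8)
The plan is to determine $|I_{i_+}|$ by establishing the two bounds $|I_{i_+}|\ge 1$ and $|I_{i_+}|\le 1$ by rather different means: the lower bound comes from the helicity inequality applied to the single partition element $J_{i_+}$, while the upper bound follows from a global pigeonhole comparison between the positive boundary components of the domain and those of the target. For the lower bound, recall from Lemma \ref{lemma_property_partition} (used just above) that $(0,i_+)$ is the only domain component in $J_{i_+}$, so inequality \eqref{eqn_proof_positive_pairing_1} reads $\sum_{i\in I:\,(1,i)\in J_{i_+}}h(i)\ge C^nh(i_+)$. As $C>0$ and $h(i_+)>0$, the right-hand side is strictly positive; since every summand indexed by $i\in I_0\cup I_-$ is non-positive, at least one summand with $i\in I_+$ must occur, and by the definition \eqref{eq:I i+} of $I_{i_+}$ this yields $|I_{i_+}|\ge 1$. (This step uses only $C>0$.)

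For the upper bound I would argue by counting. The sets $\{1\}\x I_{i_+}$, indexed by $i_+\in I_+$, are pairwise disjoint, because they sit inside the distinct partition elements $J_{i_+}$; these are mutually distinct since, by Lemma \ref{lemma_property_partition}, each contains exactly one domain component, namely $(0,i_+)$. Each $I_{i_+}$ is moreover a subset of $I_+$. Hence $\bigsqcup_{i_+\in I_+}\big(\{1\}\x I_{i_+}\big)$ is a disjoint union of subsets of $\{1\}\x I_+$, giving $\sum_{i_+\in I_+}|I_{i_+}|\le |I_+|$. Combined with the $|I_+|$ lower bounds $|I_{i_+}|\ge 1$ from the previous paragraph, we get $|I_+|\le \sum_{i_+\in I_+}|I_{i_+}|\le |I_+|$, so equality holds throughout and every term equals $1$. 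This proves Claim \ref{claim_cardinality_I_{i_+}}.

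I expect the only point requiring care to be the bookkeeping underlying the count, rather than any analytic input. Specifically, one must note that the target of $\varphi$ is again $(M,\om)$, so its set of positive-helicity boundary components is exactly $\{1\}\x I_+$ and has the same cardinality $|I_+|$ as on the domain side; and one must use that distinct $i_+$ produce distinct partition elements, which is exactly where Lemma \ref{lemma_property_partition}, and through it the $1$-connectedness of $M$, is invoked. Once these are recorded, the pigeonhole step is immediate. It is worth emphasizing that the constants $C_1$ and $C_2$ of \eqref{def_C1} and \eqref{def_C2}, and thus the hypothesis $C>C_0$, play no role in this claim; they enter only in the subsequent argument, where one must further exclude negative target components from $J_{i_+}$.
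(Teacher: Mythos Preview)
Your proof is correct and follows essentially the same approach as the paper: the lower bound $|I_{i_+}|\geq1$ comes from the helicity inequality \eqref{eqn_proof_positive_pairing_1}, and the upper bound comes from the same pigeonhole count, using that the $I_{i_+}$'s are disjoint subsets of $I_+$. Your explicit justification that distinct $i_+$'s yield distinct (hence disjoint) $J_{i_+}$'s via Lemma \ref{lemma_property_partition} is a point the paper leaves implicit, and your remark that the constants $C_1,C_2$ play no role in this claim is accurate.
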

\begin{proof}[Proof of Claim \ref{claim_cardinality_I_{i_+}}] Since the first term on the left-hand side of \eqref{eqn_proof_positive_pairing_1} is negative, the remaining sum is positive. It follows that there exists $i_+' \in I_+$, such that $(1, i_+')\in J_{i_+}$. This shows that
		\begin{equation}\label{eqn_cardinality_positive_right}
			|I_{i_+}|\geq 1.
		\end{equation}
	Since $I_+\supseteq\bigcup_{i_+\in I_+}I_{i_+}$, we have that
	\begin{equation}\label{eqn_upperbound_union}
		\left|I_+\right|\geq\left|\bigcup_{i_+\in I_+}I_{i_+}\right|.
	\end{equation}
	Since the elements of the partition $\mathcal{P}^{\varphi}$ are disjoint, the $J_{i_+}$'s are disjoint, and hence the $I_{i_+}$'s are also disjoint. It follows that
	\begin{equation}\label{eqn_sum_equals_union}
		\left|\bigcup_{i_+\in I_+}I_{i_+}\right|=\sum_{i_+\in I_+}\left|I_{i_+}\right|.
	\end{equation}
If there exists $i_+ \in I_+$, such that $\left|I_{i_+}\right|>1$, then by \eqref{eqn_cardinality_positive_right}, we have
\begin{equation*}
\sum_{i_+\in I_+}\left|I_{i_+}\right|>\left|I_+\right|.
\end{equation*}
Combinig this with \eqref{eqn_upperbound_union} and \eqref{eqn_sum_equals_union}, it follows that there is no such $i_+$. This proves Claim \ref{claim_cardinality_I_{i_+}}.
	\end{proof}
We define the map
\[f:=f^{\varphi}:I_+ \to I_+,\,f(i_+)=i_+',\,\textrm{where }i_+'\textrm{ is the unique element of }I_{i_+}.\]
By Claim \ref{claim_cardinality_I_{i_+}} this map is well-defined. Since the $I_{i_+}$ are disjoint, $f$ is injective. Since $I_+$ is a finite set, it follows that $f$ is also surjective. Let $i'_+\in I_+$. Since $f$ is surjective, there exists $i_+\in I_+$, such that $f(i_+)=i'_+$. Using $f(i_+)\in I_{i_+}$ and \eqref{eq:I i+}, it follows that $(1,i'_+)\in J_{i_+}$. Therefore, we have
\[\left\{1\right\}\times I_+\subseteq \bigcup_{i_+\in I_+}J_{i_+}.\]
By Definition \ref{defi:J i} we have $(0,i_+)\in J_{i_+}$, for every $i_+ \in I_+$. It follows that
\begin{equation}\label{eqn_covering_positive_elements}
\{0,1\}\x I_+\subseteq\bigcup_{i_+\in I_+}J_{i_+}.
\end{equation}
Let $i_+ \in I_+$. We define
\[K_{i_+}:=K_{i_+}^{\varphi}:=\big\{i\in I\wo\{f(i_+)\}\,\big|\,(1,i)\in J_{i_+}\big\}.\]
Let $C \in (C_1, 1]$ and $\varphi:(M, C\omega) \hookrightarrow (M, \omega)$.
	\begin{claim}\label{claim_K_nonnegative}
		If $C>C_2$, then $K_{i_+}\cap I_-=\emptyset$.
	\end{claim}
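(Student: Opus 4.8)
The plan is to fix $i_+\in I_+$ and argue by contradiction, assuming there exists some $i_-\in K_{i_+}\cap I_-$. The starting point is the helicity inequality \eqref{eqn_proof_positive_pairing_1} applied to the partition element $J_{i_+}$. By Claim \ref{claim_cardinality_I_{i_+}}, the only positive-helicity target component occurring in $J_{i_+}$ is $f(i_+)$; hence the index set $\{i\in I:(1,i)\in J_{i_+}\}$ is the disjoint union of $\{f(i_+)\}$ and $K_{i_+}$, and $K_{i_+}\subseteq I_-\cup I_0$. In particular $\sum_{i\in K_{i_+}}h(i)\le 0$, and \eqref{eqn_proof_positive_pairing_1} rewrites as
\[
-C^nh(i_+)+h(f(i_+))+\sum_{i\in K_{i_+}}h(i)\ge 0.
\]

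The first, and crucial, step is to show that $h(f(i_+))=h(i_+)$ for every $i_+\in I_+$, using the hypothesis $C>C_1$. Dropping the non-positive sum from the displayed inequality gives $h(f(i_+))\ge C^nh(i_+)$. By the defining property \eqref{def_C1} of $C_1$, no pair $i_+,i_+'\in I_+$ can satisfy both $h(i_+)>h(i_+')$ and $h(i_+')\ge C^nh(i_+)$ when $C>C_1$; taking $i_+'=f(i_+)$ therefore forces $h(f(i_+))\ge h(i_+)$. Since $f$ is a bijection of the finite set $I_+$, summing over $i_+\in I_+$ gives $\sum_{i_+\in I_+}h(f(i_+))=\sum_{i_+\in I_+}h(i_+)$, so each of the inequalities $h(f(i_+))\ge h(i_+)$ must actually be an equality. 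This is precisely the statement that the positive-helicity components pair up, under $f$, with equal helicity.

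With this equality in hand, substituting $h(f(i_+))=h(i_+)$ into the displayed inequality yields $(1-C^n)h(i_+)+\sum_{i\in K_{i_+}}h(i)\ge 0$. Because $i_-\in K_{i_+}$ and every term of the sum is non-positive, we have $\sum_{i\in K_{i_+}}h(i)\le h(i_-)$, and hence $(1-C^n)h(i_+)\ge -h(i_-)$. On the other hand, the hypothesis $C>C_2$ together with the defining property \eqref{def_C2} of $C_2$ gives $(1-C^n)h(i_+)<-h(i_-)$ for all $i_+\in I_+$ and $i_-\in I_-$. This contradiction shows that no such $i_-$ exists, i.e.\ $K_{i_+}\cap I_-=\emptyset$.

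I expect the main obstacle to be obtaining the correct direction of comparison between $h(f(i_+))$ and $h(i_+)$: the helicity inequality together with the property of $C_1$ only produces the lower bound $h(f(i_+))\ge h(i_+)$, whereas the final contradiction needs $h(f(i_+))\le h(i_+)$. The device that resolves this is the bijection (pigeonhole) argument of the second paragraph, which upgrades the one-sided bound to the equality $h\circ f=h$ by exploiting that $f$ permutes the finite set $I_+$ and therefore preserves its total helicity.
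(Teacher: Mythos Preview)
Your argument is correct and follows essentially the same route as the paper's proof: both first establish $h(f(i_+))=h(i_+)$ from the hypothesis $C>C_1$, and then combine this with the helicity inequality \eqref{eqn_proof_positive_pairing_1} to derive $(1-C^n)h(i_+)\ge -h(i_-)$, contradicting $C>C_2$. The only difference lies in how the equality $h\circ f=h$ is obtained: the paper argues via the orbit of $i_+$ under the permutation $f$, using that $f^{s_0}(i_+)=i_+$ for some $s_0$ and chaining the inequalities $h(i_+)\le h(f(i_+))\le\dots\le h(f^{s_0}(i_+))=h(i_+)$, whereas you sum the inequalities $h(f(i_+))\ge h(i_+)$ over all $i_+\in I_+$ and use that a bijection preserves the total. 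Both devices are equally elementary and yield the same conclusion.
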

	\begin{proof}[Proof of Claim \ref{claim_K_nonnegative}]
It follows from the definitions of $K_{i_+}$ and $f$ that the elements of $K_{i_+}$ have negative or zero helicity. 
\begin{claim}\label{claim_composition_hf}We have
\[h(f(i_+))=h(i_+).\]
\end{claim}
\begin{proof}[Proof of Claim \ref{claim_composition_hf}] Using \eqref{eqn_proof_positive_pairing_1} and that the elements of $K_{i_+}$ have negative or zero helicity, we have
			\begin{equation*}
				-C^nh(i_+) + h(f(i_+)) \geq 0.
			\end{equation*}
			Considering $i_+':=f(i_+)$ and using our assumption that $C>C_1$, by the definition of $C_1$ in \eqref{def_C1}, it follows that
			\begin{equation}\label{eqn_leq_hf}
				h(i_+)\leq h(f(i_+)).
			\end{equation}
			Since $f$ is a permutation on the finite set $I_+$, there exists $s_0 \in \N$, such that $f^{s_0}(i_+)=i_+$.\footnote{Here $f^{s_0}$ denotes the composition $f\circ\cdots\circ f$, in which $f$ occurs $s_0$ times.} From this and \eqref{eqn_leq_hf}, it follows that
			\begin{equation*}
				h(i_+) \leq h(f(i_+)) \leq \dots \leq h(f^{s_0}(i_+))=h(i_+).
			\end{equation*}
			This implies that $h(i_+)=h(f(i_+))$. This concludes the proof of Claim \ref{claim_composition_hf}.
		\end{proof}
Assume that $K_{i_+}\cap I_-\neq\emptyset$. We choose $i_-\in K_{i_+}\cap I_-$. We have
\begin{align*}(1-C^n)h(i_+)&=-C^nh(i_+)+h(f(i_+))\qquad\textrm{(using Claim \ref{claim_composition_hf})}\\
&\geq-\sum_{i\in K_{i_+}}h(i)\qquad\textrm{(using \eqref{eqn_proof_positive_pairing_1})}\\
&\geq-h(i_-)\qquad\textrm{(since $h(i)\leq0$, for every $i\in K_{i_+}$).}
\end{align*}
Using the definition \eqref{def_C2} of $C_2$, it follows that $C\leq C_2$. The statement of Claim \ref{claim_K_nonnegative} follows.
\end{proof}	 
Let $C\in(C_0,1]$ and $\varphi:(M,C\omega)\hookrightarrow(M,\omega)$. Let $J$ be an element of $\mathcal{P}^{\varphi}$ that contains some $i\in\{0,1\}\x I_+$. By \eqref{eqn_covering_positive_elements}, there exists $i_+\in I_+$, such that $i\in J_{i_+}$. Since the elements of $\mathcal{P}^{\varphi}$ are disjoint, it follows that $J=J_{i_+}$. Hence by Lemma \ref{lemma_property_partition} and Claim \ref{claim_K_nonnegative}, $J$ does not contain any negative helicity element. The statement of Lemma \ref{lemma_positive_helicity_partition} follows.
\end{proof}

\section{Proof of Proposition \ref{prop_recognition_normalized_capacities}}\label{section_proof_prop}

In the proof of Proposition \ref{prop_recognition_normalized_capacities} we will use the following.
\begin{remark}[normalization of capacity]\label{rmk_def_cbar} Let $\mathcal{C}=(\mathcal{O}, \mathcal{M})$ be a $(2n, 2)$-form category containing the open unit ball $B$ and cylinder $Z$. Let $\mathcal{C}_0$ be the full subcategory of $\mathcal{C}$ whose objects are given by $\mathcal{O}_0:=\mathcal{O}\cap \mathcal{O}_0^{2n, 2}$, where $\mathcal{O}_0^{2n, 2}$ is defined in \eqref{eq_subcat_beth1}. Let $(M, \omega)$ be an object of $\Forms{2n}{2}$. We define
	\begin{equation}\label{eqn_def_cbar}
		\bar{c}_{(M, \omega)}:= \max\{c_{(M, \omega)}, w\}.
	\end{equation}
	Then $(\mathcal{C}_0, \bar{c}_{(M, \omega)})$ is a capacity on $\mathcal{C}$. Furthermore, if
	\begin{equation}\label{eqn_condition_normalization}
		c_{(M, \omega)}(Z) \leq 1,\footnote{Compare to footnote \ref{evaluate}.}
	\end{equation}
	then $(\mathcal{C}_0, \bar{c}_{(M, \omega)})$ is a normalized capacity on $\mathcal{C}$.
	\demo
\end{remark}

\begin{proof}[Proof of Proposition \ref{prop_recognition_normalized_capacities}] \setcounter{claim}{0}
	\begin{claim}\label{claim_normalized_cap_isom} Let $(M,\omega)$ and $(M', \omega')$ be objects of $\mathcal{C}$. Assume that all normalized capacities, whose domain contains $(M, \omega)$ and $(M',\omega')$, agree on $(M, \omega)$ and $(M', \omega')$. Suppose also that $(M, \omega)$ and $(M', \omega')$ are not isomorphic to $B$ nor $Z$. Then they are isomorphic to each other in $\mathcal{C}$.
	\end{claim}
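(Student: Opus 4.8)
The plan is to mirror the proof of Lemma~\ref{lemma_isomorphism} (equivalently the proof of Theorem~\ref{thm_recognition}), but using the \emph{normalized} embedding capacities $\bar{c}_{(M,\omega)}$ from Remark~\ref{rmk_def_cbar} in place of the ordinary embedding capacities $c_{(M,\omega)}$. First I would record that, by hypothesis, neither $(M,\omega)$ nor $(M',\omega')$ is isomorphic to $B$ or $Z$, so by the normalization inequality \eqref{ineq_thin_object} we have $c_{(M,\omega)}(Z)<1$ and $c_{(M',\omega')}(Z)<1$; in particular condition \eqref{eqn_condition_normalization} holds for both objects, so by Remark~\ref{rmk_def_cbar} the pairs $(\mathcal{C}_0,\bar{c}_{(M,\omega)})$ and $(\mathcal{C}_0,\bar{c}_{(M',\omega')})$ are genuinely \emph{normalized} capacities on $\mathcal{C}$. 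This is what lets me feed them into the hypothesis that all normalized capacities agree on the two objects.

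Next I would extract the two key embedding inequalities, exactly as in the proof of Theorem~\ref{thm_recognition}. Since the normalized capacities $\bar{c}_{(M,\omega)}$ and $\bar{c}_{(M',\omega')}$ agree on $(M,\omega)$ and $(M',\omega')$, I get
\begin{align*}
\bar{c}_{(M,\omega)}(M',\omega') &= \bar{c}_{(M,\omega)}(M,\omega),\\
\bar{c}_{(M',\omega')}(M,\omega) &= \bar{c}_{(M',\omega')}(M',\omega').
\end{align*}
The task is then to convert these equalities on the $\max$-capacities into the two inequalities $c_{(M,\omega)}(M',\omega')\geq 1$ and $c_{(M',\omega')}(M,\omega)\geq 1$ required by Lemma~\ref{lemma_isomorphism}. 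The main obstacle, and the only real difference from the generalized case, is precisely this step: because $\bar{c}_{(M,\omega)}=\max\{c_{(M,\omega)},w\}$, the equality of the $\max$-values does not immediately tell me which of the two terms realizes the maximum, so I cannot blindly read off a lower bound on the embedding capacity $c_{(M,\omega)}$ alone. I would resolve this by showing that on the relevant objects the Gromov-width term $w$ cannot be the one that matters. Concretely, I expect to use the normalization hypothesis \eqref{ineq_thin_object}, $w(M,\omega)\,c_{(M,\omega)}(Z)<1$, together with the monotonicity/functoriality of $c_{(M,\omega)}$ and the product-type comparison between the width and the embedding capacity of the cylinder, to argue that $w(M,\omega)<1$ and more importantly that evaluating $\bar{c}_{(M,\omega)}(M,\omega)$ forces $\bar{c}_{(M,\omega)}(M,\omega)=c_{(M,\omega)}(M,\omega)\geq 1$ (since $(M,\omega)$ embeds into itself), while the value $\bar{c}_{(M,\omega)}(M',\omega')=\max\{c_{(M,\omega)}(M',\omega'),w(M',\omega')\}$, if realized by the $w$ term, would contradict the thinness inequality \eqref{ineq_thin_object} applied to $(M',\omega')$.

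Granting that bookkeeping, the equalities above yield $c_{(M,\omega)}(M',\omega')\geq 1$ and, symmetrically, $c_{(M',\omega')}(M,\omega)\geq 1$, which is exactly hypothesis \eqref{eqn_embedding_capacity_isom} of Lemma~\ref{lemma_isomorphism}. Since every object of $\mathcal{C}$ satisfies the assumptions of Theorem~\ref{thm_recognition} by hypothesis, both $(M,\omega)$ and $(M',\omega')$ are compact, $1$-connected, with exact maxipotent form inducing negative helicity on some boundary component, so Lemma~\ref{lemma_isomorphism} applies and produces a diffeomorphism intertwining the forms; as $\mathcal{C}$ is isomorphism-closed, this shows $(M,\omega)\cong(M',\omega')$ in $\mathcal{C}$, proving Claim~\ref{claim_normalized_cap_isom}. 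I would then finish the proof of Proposition~\ref{prop_recognition_normalized_capacities} by noting that for any pair of non-isomorphic objects, at least one of which is not isomorphic to an element of $\{B,Z\}$, the contrapositive of the Claim supplies a normalized capacity distinguishing them, which is the definition of recognition by normalized capacities.
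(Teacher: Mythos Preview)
There is a genuine gap at the very first step. You claim that inequality \eqref{ineq_thin_object}, namely $w(M,\omega)\,c_{(M,\omega)}(Z)<1$, yields $c_{(M,\omega)}(Z)<1$, and hence that $\bar c_{(M,\omega)}$ is normalized. This does not follow: the product being less than $1$ constrains neither factor individually. For a thin spherical shell scaled to fit inside a very small ball, $c_{(M,\omega)}(Z)$ can be arbitrarily large while $w(M,\omega)$ is correspondingly small. Without condition \eqref{eqn_condition_normalization} you cannot apply Remark~\ref{rmk_def_cbar}, so $\bar c_{(M,\omega)}$ need not be normalized and you cannot invoke the hypothesis that normalized capacities agree on $(M,\omega)$ and $(M',\omega')$; the argument does not get off the ground.

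The paper's remedy is to \emph{rescale}: it sets $A:=c_{(M,\omega)}(Z)$ and works with $\bar c_{(M,A\omega)}$, for which $c_{(M,A\omega)}(Z)=1$, so Remark~\ref{rmk_def_cbar} applies (with a separate easy treatment of the case $A=0$). The disentangling of the $\max$ then goes through cleanly: since the Gromov width $w$ is itself a normalized capacity, the hypothesis gives $w(M',\omega')=w(M,\omega)$, and \eqref{ineq_thin_object} for $(M,\omega)$ yields $w(M,\omega)<1/A$; on the other hand $\bar c_{(M,A\omega)}(M',\omega')=\bar c_{(M,A\omega)}(M,\omega)\geq 1/A$, so the strict inequality forces the $\max$ on $(M',\omega')$ to be achieved by the embedding-capacity term, giving $c_{(M,A\omega)}(M',\omega')\geq 1/A$, i.e.\ $c_{(M,\omega)}(M',\omega')\geq 1$. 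Your sketch of this step (``if realized by the $w$ term, would contradict \eqref{ineq_thin_object} applied to $(M',\omega')$'') is also not correct as stated: the thinness inequality for $(M',\omega')$ involves $c_{(M',\omega')}(Z)$, not $c_{(M,\omega)}(Z)$, and yields no contradiction on its own; the real lever is the equality $w(M',\omega')=w(M,\omega)$ coming from $w$ being normalized, combined with \eqref{ineq_thin_object} for $(M,\omega)$.
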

	\begin{proof}[Proof of Claim \ref{claim_normalized_cap_isom}]
	We define
	\begin{equation*}
		A:=c_{(M, \omega)}(Z).
	\end{equation*}
Since $M$ has a negative helicity boundary component, it is nonempty. Hence there exists $a\in(0,\infty)$ and a ball $B'$ of area $a$ that symplectically embeds into $(M,\omega)$. We have
\begin{equation}\label{eq:A c M om}A=c_{(M,\omega)}(Z)\leq c_{B'}(Z)=\frac\pi a<\infty.\end{equation}
\begin{claim}\label{claim:c M om M' om'} We have
\begin{equation}\label{condition_lemma}
c_{(M,\omega)}(M',\omega')\geq1.
\end{equation}
\end{claim}
\begin{proof}[Proof of Claim \ref{claim:c M om M' om'}] \textbf{Case} $A \neq 0$: Then \eqref{eqn_condition_normalization} with $(M,\om)$ replaced by $(M, A\omega)$ holds. (Here we use \eqref{eq:A c M om}, in order to make sense of $A\om$.) Hence by Remark \ref{rmk_def_cbar} the map $\bar{c}_{(M, A\omega)}$ is a normalized capacity. Hence by our assumption we have that
\begin{align}\nn\bar{c}_{(M, A\omega)}(M',\omega')&=\bar{c}_{(M, A\omega)}(M,\omega)\\
\label{eqn_equality_normalized_cap}&\geq\frac1A,
\end{align}
where in the second step we used (\ref{eqn_def_cbar},\ref{eqn_def_embedding_capacity}) and our assumption $A\neq 0$. Since the Gromov width $w$ is a normalized capacity, we have by assumption that
	\begin{align*}
		w(M',\omega') &=w(M,\omega)\\
		&<\frac1A,\tag*{(by our assumption \eqref{ineq_thin_object}).}
	\end{align*}
Combining this with \eqref{eqn_def_cbar} and \eqref{eqn_equality_normalized_cap}, it follows that
	\begin{equation*}
		c_{(M,A\omega)}(M',\omega')\geq\frac1A.
	\end{equation*}
Inequality \eqref{condition_lemma} follows.\\

\noindent\textbf{Case} $A=0$: We define
	\begin{equation*}
		\varepsilon := \frac{1}{2w(M, \omega)}.
	\end{equation*}
Since $M$ is nonempty, we have $w(M,\omega)>0$, which implies that $\epsilon < \infty$. Hence $\epsilon\omega$ is well-defined. Since $M$ is compact, we also have that $w(M, \omega)<\infty$ and hence $\epsilon >0$. Since $c_{(M,\omega)}(Z)=A=0$ and $\epsilon>0$, inequality \eqref{eqn_condition_normalization} holds with $(M,\om)$ replaced by $(M,\varepsilon\omega)$. Hence by Remark \ref{rmk_def_cbar} the map $\bar{c}_{(M, \varepsilon\omega)}$ is a normalized capacity. Using our assumption, it follows that
\begin{align}
		\begin{split}\label{eqn_equality_normalized_cap_case2}
			\bar{c}_{(M, \varepsilon\omega)}(M',\omega')&=\bar{c}_{(M, \varepsilon\omega)}(M,\omega)\\
			& \overset{\text{by }(\ref{eqn_def_cbar},\ref{eqn_def_embedding_capacity})}{\geq}\frac1\eps.
		\end{split}
	\end{align}
	Since $w$ is a normalized capacity, by assumption, we have
	\begin{align*}
		w(M',\omega')&=w(M,\omega)\\
		&=\frac{1}{2\eps}\tag*{(by definition of $\varepsilon$).}
	\end{align*}
	Using \eqref{eqn_equality_normalized_cap_case2}, it follows that
	\begin{equation*}
		c_{(M, \varepsilon\omega)}(M',\omega')\geq\frac1\eps.
	\end{equation*}
Since $\varepsilon>0$, inequality \eqref{condition_lemma} follows.\\

We have shown that this inequality holds in all cases. This proves Claim \ref{claim:c M om M' om'}.
\end{proof}
Claim \ref{claim:c M om M' om'} with the r\^oles of $(M, \omega)$ and $(M', \omega')$ interchanged, implies that
	\begin{equation*}
		c_{(M', \omega')}(M, \omega) \geq 1.
	\end{equation*}
	Combining this with Claim \ref{claim:c M om M' om'} and Lemma \ref{lemma_isomorphism}, it follows that $(M, \omega)$ and $(M', \omega')$ are isomorphic in $\Forms{2n}{2}$. (Here we used our assumption that these objects are not isomorphic to $B$ or $Z$, and therefore they satisfy the hypotheses of Theorem \ref{thm_recognition}.) Since $\mathcal{C}$ is isomorphism-closed, it follows that $(M, \omega)$ and $(M', \omega')$ are isomorphic in $\mathcal{C}$. This proves Claim \ref{claim_normalized_cap_isom}.
	\end{proof}
	
\begin{claim}\label{claim_existence_normalized_cap} Let $(M,\omega)$ be an object in $\mathcal{C}$ that is not isomorphic to the ball nor the cylinder, such that $w(M,\om)=1$. Then there exists a normalized capacity on $\mathcal{C}$ that distinguishes $(M, \omega)$ from $B$ and $Z$.
	\end{claim}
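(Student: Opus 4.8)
The goal is to produce a single normalized capacity whose value on $(M,\om)$ differs from $1$; since by normalization every normalized capacity takes the value $1$ on both $B$ and $Z$, this automatically distinguishes $(M,\om)$ from both. The obvious candidate, $\bar c_{(M,\om)}=\max\{c_{(M,\om)},w\}$ of Remark \ref{rmk_def_cbar}, does \emph{not} work: a volume comparison shows $c_{(M,\om)}(M,\om)=1$ (an embedding $(M,a\om)\hookrightarrow(M,\om)$ multiplies the volume form $\om^{\wedge n}$ by $a^n$ under pullback and has image in the compact set $M$, so $a\le1$, while the identity realizes $a=1$), whence $\bar c_{(M,\om)}(M,\om)=\max\{1,w(M,\om)\}=1$. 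The remedy I would use is to rescale the form appearing in the subscript and work with $\bar c_{(M,t\om)}$ for a suitable $t\in(0,1)$.

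The computational backbone is the elementary scaling identity $c_{(M,s\om)}=\tfrac1s\,c_{(M,\om)}$, immediate from \eqref{eqn_def_embedding_capacity} by the substitution $b=as$ in the defining supremum. Using the hypothesis $w(M,\om)=1$ together with the thin-object inequality \eqref{ineq_thin_object} (which applies because $(M,\om)$ is isomorphic to neither $B$ nor $Z$), I would first record that $A:=c_{(M,\om)}(Z)$ satisfies $0\le A<1$; this strict inequality is exactly what makes the construction possible.

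I would then set $t:=\tfrac{A+1}{2}$, so that $0<t<1$ and $t\ge A$. The inequality $t\ge A$ gives $c_{(M,t\om)}(Z)=\tfrac1t A\le1$, so by Remark \ref{rmk_def_cbar}, applied with $(M,t\om)$ in place of $(M,\om)$, the pair $(\mathcal{C}_0,\bar c_{(M,t\om)})$ is a genuine normalized capacity on $\mathcal{C}$. The inequality $t<1$ supplies the distinguishing value: since $c_{(M,\om)}(M,\om)\ge1$ (the identity is an embedding $(M,\om)\hookrightarrow(M,\om)$), the scaling identity gives $\bar c_{(M,t\om)}(M,\om)\ge c_{(M,t\om)}(M,\om)=\tfrac1t c_{(M,\om)}(M,\om)\ge\tfrac1t>1$. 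As this capacity is normalized it takes the value $1$ on $B$ and on $Z$, so it distinguishes $(M,\om)$ from both, which is Claim \ref{claim_existence_normalized_cap}.

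The one point requiring care, and the only place where the special hypotheses enter, is the choice of $t$: it must lie strictly below $1$ to push the value above $1$, yet be at least $A$ to keep the normalization condition \eqref{eqn_condition_normalization} intact. Both demands can be met simultaneously precisely because the thin-object hypothesis, combined with $w(M,\om)=1$, forces $A<1$, leaving the nonempty window $[A,1)$ from which to pick $t$. Beyond arranging this balance I anticipate no genuine obstacle; the remaining steps are routine manipulations of the embedding capacity.
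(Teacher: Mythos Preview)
Your proof is correct and follows essentially the same idea as the paper: rescale the subscript form and use the capacity $\bar c_{(M,t\om)}$ for some $t\in(0,1)$ with $t\ge A$, so that normalization holds while the value on $(M,\om)$ exceeds~$1$. The only difference is cosmetic: the paper takes $t=A$ when $A\neq0$ and $t=\tfrac12$ when $A=0$, whereas your uniform choice $t=\tfrac{A+1}{2}$ handles both cases at once and avoids the split.
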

	\begin{proof}[Proof of Claim \ref{claim_existence_normalized_cap}] We define $A:=c_{(M, \omega)}(Z)$. As in the proof of Claim \ref{claim_normalized_cap_isom}, we have $A<\infty$.\\

\noindent\textbf{Case} $A \neq 0$: By our assumption that $(M,\om)$ is not isomorphic to $B$ nor $Z$, inequality \eqref{ineq_thin_object} holds. We have
\begin{align*}
\bar{c}_{(M, A\omega)}(M, \omega) &\geq \frac{1}{A}\tag*{(using \eqref{eqn_def_cbar} and $A\neq0$)}\\
&> 1 \tag*{(using \eqref{ineq_thin_object} and our assumption that $w(M,\omega)=1$)}\\
&=\bar{c}_{(M, A\omega)}(Z)\tag*{(by our definition of $A$).}
\end{align*}
It follows that $\bar{c}_{(M, A\omega)}$ distinguishes $(M, \omega)$ from $B$ and $Z$.\\

\noindent\textbf{Case} $A=0$: Then we have
\[\bar{c}_{(M, \frac{1}{2}\omega)}(M, \omega)\geq2>1=\bar{c}_{(M, \frac{1}{2}\omega)}(Z),\]
where in the last step we used that $A=0$ and our assumption $w(M,\om)=1$. It follows that $\bar{c}_{(M, \frac{1}{2}\omega)}$ distinguishes $(M, \omega)$ from $B$ and $Z$. This proves Claim \ref{claim_existence_normalized_cap}.
\end{proof}
The statement of Proposition \ref{prop_recognition_normalized_capacities} follows from Claims \ref{claim_normalized_cap_isom} and \ref{claim_existence_normalized_cap} and the fact that in the case $w(M,\om)\neq1$, the Gromov width distinguishes $\smfld$ from $B$ and $Z$.
\end{proof}

\section{Hypotheses of the main result that cannot be dropped}\label{sec:nec}

The hypothesis in Theorem \ref{thm_recognition} that every object of $\mathcal{C}$ has some negative helicity boundary component cannot be dropped. This is the content of the following proposition.

\begin{prop}[negative helicity hypothesis cannot be dropped]\label{prop:neg hel} Let $n\geq2$ and consider the full subcategory $\mathcal{C}$ of $\Om^{2n,2}$ whose objects are the compact, 1-connected, exact symplectic manifolds. This category does not satisfy the conclusion of Theorem \ref{thm_recognition}.\footnote{The objects of $\mathcal{C}$ need not satisfy the helicity assumption of Theorem \ref{thm_recognition}.}
\end{prop}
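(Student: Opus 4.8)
The plan is to refute recognition directly: I would exhibit a pair of non-isomorphic objects $\smfld$ and $\smfldp$ of $\mathcal{C}$ on which \emph{every} generalized capacity takes the same value. By the definition of recognition, producing even one such pair shows that $\mathcal{C}$ does not satisfy the conclusion of Theorem \ref{thm_recognition}. The scheme splits into a soft part (mutual approximate embeddings force all capacities to agree) and a hard part (constructing two such domains that are nevertheless not symplectomorphic).

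For the soft part I would use only monotonicity and conformality. Suppose the two objects satisfy $c_{\smfld}\smfldp \geq 1$ and $c_{\smfldp}\smfld \geq 1$, i.e.\ for every $a \in (0,1)$ there are $\Forms{2n}{2}$-morphisms $(M, a\omega) \hookrightarrow \smfldp$ and $(M', a\omega') \hookrightarrow \smfld$. Let $(\mathcal{C}_0, c)$ be any capacity with $\smfld, \smfldp \in \mathcal{O}_0$. Since $\mathcal{C}_0$ is invariant, $(M, a\omega) \in \mathcal{O}_0$, and since $\mathcal{C}$ is a full subcategory of $\Forms{2n}{2}$ and $\mathcal{C}_0$ is full in $\mathcal{C}$, the embedding above is a $\mathcal{C}_0$-morphism. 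Monotonicity and conformality then give $a\, c(M,\omega) = c(M, a\omega) \leq c(M',\omega')$ for all $a < 1$; letting $a \to 1$ yields $c(M,\omega) \leq c(M',\omega')$, and the symmetric estimate gives the reverse inequality. Hence $c\smfld = c\smfldp$ for every capacity, so no capacity distinguishes the two objects.

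It then remains to produce two non-symplectomorphic objects of $\mathcal{C}$ with mutual approximate embeddings. I would take $\smfld$ and $\smfldp$ to be the closures of two bounded star-shaped domains in $\R^{2n}$ equipped with $\omst$. Such closures are automatically compact, $1$-connected (being contractible), and exact, so they lie in $\mathcal{C}$; moreover their single boundary sphere has positive helicity (by Stokes' Theorem for helicity it equals the volume), so these objects fall outside the hypotheses of Theorem \ref{thm_recognition}, consistent with the statement. The required pair --- equal volume, not symplectomorphic, yet each symplectically embeddable into the other up to arbitrarily small loss of volume --- is exactly what is supplied by \cite[Theorem 1.1]{EH96}. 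Feeding this pair into the soft argument above finishes the proof, and the same construction recovers the Kerman--Liang non-recognition statement \cite{KL}.

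The main obstacle is precisely this last construction. Note first that mutual approximate embeddings force $\mathrm{Vol}\smfld = \mathrm{Vol}\smfldp$, and that an honest (unrescaled, $a=1$) symplectic embedding between compact symplectic manifolds of equal volume is automatically onto, hence a symplectomorphism, by the volume argument in the final paragraph of the proof of Lemma \ref{lemma_isomorphism}. Thus the two domains must be mutually embeddable only in the limit $a \to 1$ and never at $a = 1$; realizing this ``filling up to full volume without ever filling it'' is the genuinely hard, quantitative symplectic-embedding input, and it is what \cite[Theorem 1.1]{EH96} provides. Everything else in the argument is formal.
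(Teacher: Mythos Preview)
Your proposal is correct and follows essentially the same approach as the paper: both invoke \cite[Theorem 1.1]{EH96} to obtain two non-symplectomorphic, strongly starshaped compact domains in $(\R^{2n},\omst)$, and both use the soft monotonicity--conformality argument (with $a\to1$) to conclude that every capacity agrees on the pair. The only cosmetic difference is that the paper uses the literal statement of \cite{EH96} --- that the \emph{interiors} are symplectomorphic --- and then scales via $aM\subseteq\Int M$ to produce the embeddings, whereas you package this as ``mutual approximate embeddings'' ($c_{\smfld}\smfldp\geq1$ and vice versa); these are equivalent here, and your formulation is arguably cleaner.
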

\begin{proof}[Proof of Proposition \ref{prop:neg hel}]\setcounter{claim}{0} By Theorem 1.1 in \cite{EH96} there exist strongly starshaped\footnote{We call a submanifold $M$ of $\R^m$ \emph{strongly starshaped} iff for every point $x\in M$ the line segment from 0 to $x$ is contained in $M$ and every line emanating from 0 intersects the boundary of $M$ transversely.} compact submanifolds $M$ and $M'$ of $\R^{2n}$ of dimension $2n$ that are not symplectomorphic w.r.t.~the standard symplectic form $\omst$, but have symplectomorphic interiors.

The manifolds $M$ and $M'$ are contractible and therefore 1-connected.\footnote{We may choose $M$ and $M'$ to be arbitrarily $C^\infty$-close to the closed unit ball.} It follows that $(M,\omst)$ and $(M',\omst)$ are objects of $\mathcal{C}$.
\begin{claim}\label{claim:dist}The capacities on $\mathcal{C}$ do not distinguish these objects.
\end{claim}
\begin{pf}[Proof of Claim \ref{claim:dist}] Let $c$ be a capacity on $\mathcal{C}$. Let $a\in(0,1)$. We choose a symplectomorphism $\phi:\Int M\to\Int M'$. Since $M$ is strongly starshaped, $aM$ is contained in $\Int M$. Hence $\phi(aM)$ is well-defined. We have
\begin{align*}&a^2c(M,\omst)\\
=&c\big(M,a^2\omst\big)\\
=&c\big(aM,\omst\big)\\
&\textrm{(since $\big(M,a^2\omst\big)$ and $\big(aM,\omst\big)$ are symplectomorphic)}\\
=&c\big(\phi(aM),\omst\big)\,\textrm{(using that $\phi:aM\to\phi(aM)$ is a symplectomorphism)}\\
\leq&c(M',\omst)\qquad\textrm{(using that $\phi(aM)\sub\Int M'\sub M'$).}
\end{align*}
Since this holds for every $a\in(0,1)$, it follows that $c(M,\omst)\leq c(M',\omst)$. Interchanging the r\^oles of $M$ and $M'$, the opposite inequality follows. Hence we have
\[c(M,\omst)=c(M',\omst).\]
Since this holds for all capacities $c$, it follows that $(M,\omst)$ and $(M',\omst)$
cannot be distinguished by capacities. This proves Claim \ref{claim:dist} and completes the proof of Proposition \ref{prop:neg hel}.
\end{pf}
\end{proof}

\begin{remark}[compactness hypothesis cannot be dropped]\label{rmk:cpt} The compactness hypothesis in Theorem \ref{thm_recognition} cannot be dropped. To see this, we denote by $B^m_r(x)$ (resp. $\overline{B}^m_r(x))$ the open (resp.~closed) ball of radius $r$ around $x$ in $\mathbb{R}^{m}$. We define
\begin{align*}
&M:=\\
&\overline{B}^{2n}_3\big(0, -2, 0, \dots, 0\big) \setminus\Big(B^{2n}_1\big(0, -2, 0, \dots, 0\big) \cup\left([0, 1]\times\left\{\big(0, \dots, 0\big)\right\}\right)\Big),\\
&M':= M \setminus\left\{\big(2, 0, \dots, 0\big)\right\}.
\end{align*}
See Figure \ref{fig_slit_in_shell}. We equip $M$ and $M'$ with the standard symplectic form. Since $M' \subseteq M$, it follows that $c(M') \leq c(M)$ for every capacity $c$. We consider the Hamiltonian function $H\big(q^1, p_1, \dots, q^n, p_n\big):= q^1p_1$. We choose a smooth function $\rho: \mathbb{R}^{2n} \to \mathbb{R}$ that equals $1$ in some neighborhood of $[0,2] \times\left\{\big(0, \dots, 0\big)\right\}$ and has compact support in $\overline{B}_3\big(0, -2, 0, \dots, 0\big) \setminus B_1\big(0, -2, 0, \dots, 0\big)$. The Hamiltonian flow of $H$ in $\R^{2n}$ is given by
\begin{equation*}
	\varphi_H^t\big(q^1, p_1, \dots q^n, p_n\big) = \big(e^tq^1, e^{-t}p_1, q^2, p_2, \dots, q^n, p_n\big).
\end{equation*}

It follows that the restriction of the Hamiltonian time-$t$ flow $\varphi^t$ of $\rho H$ to $[0, 1] \times \{(0, \dots, 0)\}$ is given by the same formula. Hence the image of $[0, 1]\times\left\{\big(0, \dots, 0\big)\right\}$ under $\varphi^{\log(2)}$ includes $[0,2]\times\{(0,\dots,0)\}$. Therefore, $\varphi^{\log(2)}$ maps $M$ to $M'$. See Figure \ref{fig_slit_in_shell}. It follows that $c(M) \leq c(M')$ for every capacity $c$. Hence all capacities agree on $M$ and $M'$, although these manifolds are not diffeomorphic (and hence not symplectomorphic).
\demo
\end{remark}

\begin{figure}[ht]
	\centering
	
	\def\svgwidth{0.8\columnwidth}
	\import{figures/}{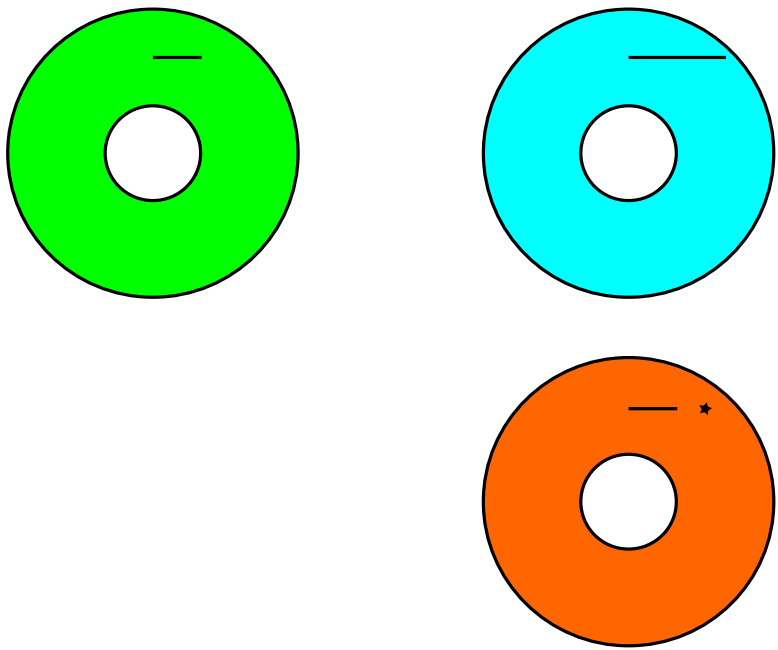}

	\caption{The green slit shell is $M$, and the orange punctured and slit shell is $M'$, which is $M$ with a point removed. The blue shell is the image of $M$ under the Hamiltonian flow $\phi^{\log2}$. The line segment is stretched so much that it includes the puncture of the orange shell. Hence the orange punctured and slit shell includes the blue slit shell. (For better visibility we moved the puncture slightly to the left.)}
	\label{fig_slit_in_shell}
\end{figure}

\appendix
\section{Isomorphism-closed subcategories}\label{section_subcat}
In this section, we address some set-theoretic issues that arise when considering categories and subcategories in the ZFC axiomatic system. In ZFC, there is no notion of a class. However, ZFC can deal with classes that are determined by a well-formed logical formula, such as the class of all sets, and the class of all smooth manifolds. This way, every statement about the class can be reformulated in terms of the logical formula. (Since there are only countably many logical formulas, they do not define all categories.)

Let $\widehat{\mathcal{C}}$ be a category determined by a formula, and let $\mathcal{C}_0$ be a small subcategory of $\widehat{\mathcal{C}}$, i.e. ~a pair $(\mathcal{O}_0, \mathcal{M}_0)$ of sets that satisfies the logical formula defining $\widehat{\mathcal{C}}$.

\begin{definition}[isomorphism-closure]\label{def_isom_closure} We define the \emph{isomorphism-closure} of $\mathcal{C}_0$ in $\widehat{\mathcal{C}}$ to be the category whose objects are the objects of $\widehat{\mathcal{C}}$ that are isomorphic in $\widehat{\mathcal{C}}$ to an object of $\mathcal{C}_0$, and whose morphisms are obtained by composition of the isomorphisms of $\widehat{\mathcal{C}}$ with the morphisms of $\mathcal{C}_0$. More specifically, the morphisms are of the form $\psi \circ f_0 \circ \varphi^{-1}$, where $\varphi$ and $\psi$ are isomorphisms in $\widehat{\mathcal{C}}$ and $f_0$ is a morphism in $\mathcal{C}_0$.
\end{definition}

Assume that $\widehat{\mathcal{C}}$ is locally small and that there exists a set $S$ of objects of $\widehat{\mathcal{C}}$, such that every object of $\widehat{\mathcal{C}}$ is isomorphic to an element of $S$.

\begin{definition}[isomorphism-closed subcategory]\label{def_isom_closed_subcat} The subcategories of $\widehat{\mathcal{C}}$ that we obtain as isomorphism-closures as in Definition \ref{def_isom_closure} are called \emph{isomorphism-closed subcategories} of $\widehat{\mathcal{C}}$.
\end{definition}

\begin{remark}[isomorphism-closed subcategory]\label{rmk_isom_closed_subcat}
	The category $\widehat{\mathcal{C}}$ is an isomorphism-closed subcategory of itself. This holds, because the full subcategory of $\widehat{\mathcal{C}}$ with the elements of $S$ as objects is small by the local smallness of $\widehat{\mathcal{C}}$. This is the reason for imposing the conditions of the existence of $S$ and the local smallness of $\widehat{\mathcal{C}}$. Without these conditions $\widehat{\mathcal{C}}$ need not be an isomorphism-closed subcategory of itself in the sense of Definition \ref{def_isom_closed_subcat}. As an example, the category with sets as objects and only the identity morphisms as morphisms is not an isomorphism-closed subcategory of itself in this sense.
\demo
\end{remark}
\begin{rmk}[essential bijection with a small subcategory]
	 A functor $F: \mathcal{C} \to \mathcal{D}$ is called \emph{essentially injective} iff for every pair $(A,B)$ of objects in $\mathcal{C}$, the condition that $F(A)$ is isomorphic to $F(B)$ implies that $A$ is isomorphic to $B$. The functor $F$ is \emph{essentially surjective} iff for every object $B$ of $\mathcal{D}$, there is an object $A$ of $\mathcal{C}$, such that $B$ is isomorphic to $F(A)$. A functor that is both essentially injective and essentially surjective is called \emph{essentially bijective}. Let $S$ be as described before Definition \ref{def_isom_closed_subcat}. The inclusion of the full small subcategory of $\widehat{\mathcal{C}}$ whose objects are the elements of $S$ into $\widehat{\mathcal{C}}$ is an essentially bijective functor.
	\demo
\end{rmk}

Let $G$ be a group that is determined by a logical formula, and let $(\widehat{\mathcal{O}}, \widehat{\mathcal{M}}):=\widehat{\mathcal{C}}$ be a category determined by a formula. Formally, by a $G$-action on $\widehat{\mathcal{C}}$ we mean a map
\begin{equation*}
	\rho: G \times \widehat{\mathcal{C}} \to \widehat{\mathcal{C}}
\end{equation*}
such that for every $g, h$ in $G$, every $A$ in $\widehat{\mathcal{O}}$ and every $f$ in $\widehat{\mathcal{M}}$, we have that
\begin{align*}
	\rho_g:=\rho(g, \cdot) &\text{ is a functor }\widehat{\mathcal{C}} \to \widehat{\mathcal{C}}\\
	\rho_e(A) = A &\quad \rho_e(f) = f\\
	\rho_h(\rho_g(A)) = \rho_{hg}(A) &\quad \rho_h(\rho_g(f)) = \rho_{hg}(f),
\end{align*}
where $e$ denotes the neutral element of $G$. The following definition makes this notion precise within ZFC.
\begin{definition}[group action on a category]\label{def_action_cat} A \emph{$G$-action} on $\widehat{\mathcal{C}}$ is a logical formula that satifies the properties corresponding to the above conditions.
\end{definition}
\begin{rmk}[group action on a category] We think of a $G$-action $\rho$ as the ``subclass'' of $(G\x\widehat{\mathcal{C}})\x\widehat{\mathcal{C}}$ determined by $\rho$.
\end{rmk}

\begin{definition}[invariant subcategory]\label{def_invariant_subcat} An isomorphism-closed or small subcategory $\mathcal{C}$ of $\widehat{\mathcal{C}}$ is called $G$-\emph{invariant} iff for every $g \in G$, the functor $\rho_g$ leaves $\mathcal{C}$ invariant.
\end{definition}

\begin{remark}[generating invariant small subcategory]\label{rmk_invariant_subcat} If $\mathcal{C}$ is an invariant isomorphism-closed subcategory of $\widehat{\mathcal{C}}$, and if $\widehat{\mathcal{C}}$ is locally small, then there exists an invariant small subcategory $\mathcal{C}_0$ of $\widehat{\mathcal{C}}$, such that $\mathcal{C}$ is the isomorphism-closure of $\mathcal{C}_0$. Indeed, if $\widetilde{\mathcal{C}_0}=(\widetilde{\mathcal{O}}_0, \widetilde{\mathcal{M}}_0)$ is a small subcategory whose isomorphism-closure is $\mathcal{C}$, then the small category $\mathcal{C}_0 =(\mathcal{O}_0, \mathcal{M}_0)$ given by
	\begin{equation*}
		\mathcal{O}_0 = \big\{\rho_g(A)\big| g \in G, A \in \widetilde{\mathcal{O}}_0\big\} \text{\quad and \quad}\mathcal{M}_0 = \big\{\rho_g(f)\big| g \in G, f \in \widetilde{\mathcal{M}}_0\big\}
		\end{equation*}
	is invariant and has $\mathcal{C}$ as isomorphism-closure.
\end{remark}

\section{Helicity}\label{section_helicity}

In this section, we list some properties of helicity and prove a version of Stokes' Theorem for helicity. This is the main ingredient of the proof of Theorem \ref{thm_recognition}, used indirectly through Lemma \ref{lemma_helicity_inequality}. Consider the setting of Definition \ref{def_helicity} (helicity).
\begin{lemma}\label{lemma_helicity_well_def} Helicity is well-defined, i.e. it is independent of the choice of a primitive.
\end{lemma}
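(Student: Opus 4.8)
The plan is to reduce the claim to Stokes' theorem by exhibiting the difference of the two integrands as an exact form. Suppose $\alpha$ and $\alpha'$ are two primitives of $\sigma$, so that $d\alpha = d\alpha' = \sigma$. Setting $\beta := \alpha - \alpha'$, the form $\beta$ is a closed $(k-1)$-form, and since the integrand depends linearly on the chosen primitive, it suffices to prove that
\begin{equation*}
\int_{N, O} \beta \wedge \sigma^{\wedge(n-1)} = 0.
\end{equation*}

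First I would record that $\sigma$ is itself closed, being exact, so that each power $\sigma^{\wedge(n-2)}$ is closed as well. Using the hypothesis $n \geq 2$ together with the Leibniz rule, this lets me rewrite $\sigma^{\wedge(n-1)} = d\alpha \wedge \sigma^{\wedge(n-2)} = d\big(\alpha \wedge \sigma^{\wedge(n-2)}\big)$, exhibiting the top power of $\sigma$ as an exact form. (When $n = 2$ the factor $\sigma^{\wedge(n-2)}$ is the constant function $1$, and this reduces to $\sigma = d\alpha$.)

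Next I would combine this with the fact that $\beta$ is closed. Applying the Leibniz rule once more and using $d\beta = 0$ gives
\begin{equation*}
d\big(\beta \wedge \alpha \wedge \sigma^{\wedge(n-2)}\big) = (-1)^{k-1}\, \beta \wedge d\big(\alpha \wedge \sigma^{\wedge(n-2)}\big) = (-1)^{k-1}\, \beta \wedge \sigma^{\wedge(n-1)},
\end{equation*}
so that $\beta \wedge \sigma^{\wedge(n-1)}$ is exact. Since $N$ is a closed manifold, Stokes' theorem then forces $\int_{N, O} \beta \wedge \sigma^{\wedge(n-1)} = 0$, which completes the argument.

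There is no serious obstacle here; the only points requiring care are the bookkeeping of degrees and the signs in the Leibniz rule. For the former, one checks that $\beta \wedge \alpha \wedge \sigma^{\wedge(n-2)}$ has degree $(k-1) + (k-1) + k(n-2) = kn - 2$, so its differential has degree $kn - 1 = \dim N$ and integrates over $N$. The hypothesis $n \geq 2$ is precisely what permits $\sigma^{\wedge(n-1)}$ to be written as a derivative; the explicit value of the sign $(-1)^{k-1}$ is irrelevant, since the exact form integrates to zero regardless.
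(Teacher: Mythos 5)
Your proof is correct and follows essentially the same route as the paper's: both exhibit $(\alpha-\alpha')\wedge\sigma^{\wedge(n-1)}$ as exact, with the same primitive $(-1)^{k-1}(\alpha-\alpha')\wedge\alpha\wedge\sigma^{\wedge(n-2)}$, and then conclude by Stokes' Theorem on the closed manifold $N$. The paper merely states this primitive without the explicit Leibniz-rule and degree bookkeeping that you spell out.
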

\begin{proof}[Proof of Lemma \ref{lemma_helicity_well_def}]
	Let $\alpha$ and $\alpha'$ be two primitives of $\sigma$. Then $\alpha - \alpha'$ is closed and hence $(-1)^{k-1}(\alpha-\alpha')\wedge\alpha\wedge\sigma^{\wedge(n-2)}$ is a primitive of $(\alpha - \alpha') \wedge \sigma^{\wedge(n-1)}$. Using that Stokes' Theorem and our assumption that $N$ does not have boundary, it follows that
\[\int_{N}(\alpha-\alpha')\wedge\sigma^{\wedge(n-1)}=0.\]
The statement of Lemma \ref{lemma_helicity_well_def} follows.	
\end{proof}
\begin{rmk}[well-definedness of helicity, odd degree] \begin{itemize}
\item Helicity is only well-defined for $n \geq 2$. See \cite[Remark 31]{JZ21}.
\item It vanishes if $k$ is odd. See \cite[Remark 31]{JZ21}.
\end{itemize}
\end{rmk}
The following facts about helicity are used in the proof of Lemma \ref{lemma_helicity_inequality}, which is Lemma 49 in \cite{JZ21}.
\begin{Rmks}[helicity]\label{rmk_helicity}
\begin{itemize}		
		\item For every $C \in \mathbb{R}$, we have $h(N, O, C\sigma) = C^n h(N, O, \sigma)$.
		\item Reversing the orientation on $N$ changes the sign of the helicity: $h(N, \overline{O}, \sigma) = -h(N, O, \sigma)$, where $\overline{O}$ is the orientation opposite to $O$.
		
		\item Let $N$ and $N'$ be two closed $kn-1$-dimensional manifolds, $O$ an orientation on $N$, $\sigma$ an exact $k$-form on $N$, and $\varphi: N \to N'$ a smooth embedding. Then we have
		\begin{equation*}
			h(\varphi(N), \phi_*O, \varphi_*\sigma) = h(N, O, \sigma).
		\end{equation*}
	\end{itemize}
	\demo
\end{Rmks}

The next lemma is one of the main ingredients in the proofs of the main results. It says that the helicity of the boundary of a manifold is equal to the volume of that manifold. Let $M$ be a manifold, $N \subseteq M$ a submanifold and $\omega$ a differential form on $M$. We define
\begin{equation*}
	\omega_N := \text{ pullback of }\omega \text{ by the inclusion of }N \text{ into }M.
\end{equation*}
Let $\partial M$ denote the boundary of $M$ and assume that $N \subseteq \partial M $. Let $O$ be an orientation on $M$. We define:
\begin{equation*}
	O_N := \text{ orientation of }N \text{ induced by }O.
\end{equation*}

Let $k,n\in\N$, such that $n\geq 2$. Let $(M, O)$ be a compact, oriented manifold of dimension $kn$, and $\omega$ be an exact $k$-form on $M$.

\begin{lemma}[Stokes' Theorem for helicity]\label{lemma_stokes_helicity}
	The following equation holds:
	\begin{equation*}
		\int_{M, O} \omega^{\wedge n} =h(\partial M, O_{\partial M}, \omega_{\partial M}).
	\end{equation*}
\end{lemma}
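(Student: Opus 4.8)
The plan is to exhibit $\omega^{\wedge n}$ as an exact form on $M$ with an explicit primitive and then apply the ordinary Stokes' Theorem. First I would fix a primitive $\alpha$ of $\omega$ on $M$, that is, a $(k-1)$-form with $d\alpha = \omega$; such a form exists by the hypothesis that $\omega$ is exact. I claim that $\alpha \wedge \omega^{\wedge(n-1)}$ is a primitive of $\omega^{\wedge n}$. Indeed, since $\omega = d\alpha$ is closed, so is $\omega^{\wedge(n-1)}$, and the Leibniz rule gives
\begin{equation*}
d\bigl(\alpha \wedge \omega^{\wedge(n-1)}\bigr) = d\alpha \wedge \omega^{\wedge(n-1)} + (-1)^{k-1}\,\alpha \wedge d\bigl(\omega^{\wedge(n-1)}\bigr) = \omega \wedge \omega^{\wedge(n-1)} = \omega^{\wedge n},
\end{equation*}
the middle term vanishing because $\omega$ is closed.

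Next I would apply Stokes' Theorem on the compact oriented manifold $(M, O)$ to obtain
\begin{equation*}
\int_{M, O} \omega^{\wedge n} = \int_{M, O} d\bigl(\alpha \wedge \omega^{\wedge(n-1)}\bigr) = \int_{\partial M,\, O_{\partial M}} \bigl(\alpha \wedge \omega^{\wedge(n-1)}\bigr)_{\partial M}.
\end{equation*}
Since pullback by the inclusion $\partial M \hookrightarrow M$ is an algebra homomorphism on differential forms, the integrand on the right equals $\alpha_{\partial M} \wedge (\omega_{\partial M})^{\wedge(n-1)}$.

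To identify this boundary integral with the helicity $h(\partial M, O_{\partial M}, \omega_{\partial M})$, I would observe that $\alpha_{\partial M}$ is a legitimate primitive of $\omega_{\partial M}$: pullback commutes with the exterior derivative, so $d(\alpha_{\partial M}) = (d\alpha)_{\partial M} = \omega_{\partial M}$. By Definition \ref{def_helicity} together with the well-definedness of helicity (Lemma \ref{lemma_helicity_well_def}), the helicity may be computed using this particular primitive, whence
\begin{equation*}
h(\partial M, O_{\partial M}, \omega_{\partial M}) = \int_{\partial M,\, O_{\partial M}} \alpha_{\partial M} \wedge (\omega_{\partial M})^{\wedge(n-1)}.
\end{equation*}
Combining the three displays yields the claimed equality.

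The computation is short, so the only points requiring care are bookkeeping ones: confirming the sign in the Leibniz rule and that the term involving $d(\omega^{\wedge(n-1)})$ genuinely drops out (which relies on $d\omega = 0$, a consequence of exactness), and checking that the orientation $O_{\partial M}$ induced on the boundary is exactly the one appearing both in Stokes' Theorem and in the definition of helicity, so that no spurious sign is introduced. I expect the main, though mild, obstacle to be precisely this orientation-and-sign matching; beyond it, no substantive difficulty should arise.
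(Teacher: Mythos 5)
Your proposal is correct and follows essentially the same route as the paper's proof: take a primitive $\alpha$ of $\omega$, observe that $\alpha \wedge \omega^{\wedge(n-1)}$ is a primitive of $\omega^{\wedge n}$, apply Stokes' Theorem, and identify the boundary integral with the helicity via the pullback primitive $\alpha_{\partial M}$. The paper merely compresses the Leibniz-rule computation and the pullback bookkeeping that you spell out explicitly, so yours is a more detailed rendering of the identical argument.
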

\begin{proof}[Proof of Lemma \ref{lemma_stokes_helicity}]
	Let $\alpha$ be a primitive of $\omega$. Then we have that $\omega^{\wedge n} = d(\alpha \wedge \omega^{\wedge (n-1)})$. Using Stokes' Theorem, we obtain
	\begin{equation*}
		\int_{M, O}\omega^{\wedge n} = \int_{\partial M, O_{\partial M}} \alpha \wedge \omega^{\wedge (n-1)} = h(\partial M, O_{\partial M}, \omega_{\partial M}).
	\end{equation*}
	This completes the proof.
\end{proof}

\begin{rmks}[Stokes' Theorem for helicity]\begin{itemize}\item If $k$ is odd then both sides of the equality in the statement of Lemma \ref{lemma_stokes_helicity} are zero. Here for the right hand side see \cite[Remark 31]{JZ21}.
\item If $\omega$ is maxipotent and $O$ is the orientation induced by $\omega$, then Lemma \ref{lemma_stokes_helicity} implies that the helicity of the boundary of $M$ with respect to the orientation induced by $O$ is positive, as it equals the volume of $M$.
\item If the image of a negative helicity boundary component under a form-preserving embedding bounds a compact manifold (e.g.~a ball), then the volume of this manifold equals minus the helicity of the boundary component. This means that the negative helicity component eats up some volume. Compare to the left hand side of Figure \ref{fig_embeddings} on page \pageref{fig_embeddings}. We used this in the proofs of our main results.
\end{itemize}
\end{rmks}

\bibliographystyle{amsalpha}
\bibliography{mathematicalBibliography}

\end{document}